\numberwithin{equation}{section}
\theoremstyle{plain}
\newtheorem{thm}{Theorem}[section]
\newtheorem{lem}[thm]{Lemma}
\newtheorem{prop}[thm]{Proposition}
\newtheorem{cor}[thm]{Corollary}
\theoremstyle{definition}
\newtheorem{ex}{Example}[section]
\theoremstyle{remark}
\def\mapstofill@{%
   \arrowfill@{\mapstochar\relbar}\relbar\rightarrow}
\newcommand*\xmapsto[2][]{%
   \ext@arrow 0395\mapstofill@{#1}{#2}}
\title{Expansion of Dirichlet L-function on the critical line in Meixner-Pollaczek polynomials}
\author{Hiroto Inoue\footnote{Research Fellow of Japan Society for the Promotion of Science.}}
\begin{document}
\maketitle
\begin{abstract}
We study the expansions of the completed Riemann zeta function and completed Dirichlet L-functions in Meixner-Pollaczek polynomials. We give the proof of the uniform convergence, the multiplicative structure for the coefficients of these expansions, and a calculation for the coefficients of a completed Dirichlet L-function $\widehat{L}(s, \chi_{-1})$. Furthermore, we give a boundary for the zeros of the approximating polynomial. 
\end{abstract}

\section{Introduction}
The Meixner-Pollaczek polynomials are defined by the hypergeometric expression
\[
q_n^{(\nu, \theta)}(s)
:=
e^{in\theta}\frac{(\nu)_n}{n!}
{}_2 F_1
\left( \begin{array}{c}
-n, \; s+\frac{\nu}{2}  \\
  \nu
\end{array};
2e^{-i\theta}\cos\theta
\right)
\quad
(n \in \mathbb{Z}, n\geq 0). 
\]
These polynomials $\{q_m^{(\nu, \theta)}(it)\}_{m\geq 0}$ form an orthogonal basis of the Hilbert space $L^2\left(\mathbb{R},  | \Gamma\left(it+\frac{\nu}{2}\right)|^2dt\right)$, see \cite{AAR1999}, \cite{FIW}. In this paper, we study the expansion of the completed Riemann zeta function $\Xi(t)$ in the  Meixner-Pollaczek polynomials $q_n^{(\frac32,0)}(it)$:
\begin{equation}\label{intro-MPex}
\Xi(t)=\sum_{n=0}^\infty a_n q_n^{(\frac32,0)}(it)
\end{equation}
in the Hilbert space $L^2\left(\mathbb{R},  | \Gamma\left(it+\frac34\right)|^2dt\right)$. 
Here the completed Riemann zeta function is defined by 
$
\Xi(t)=\pi^{-\frac{s}{2}}\Gamma\left(\frac{s}{2}\right)\zeta(s), 
\;
s=\frac12+2it,
$ 
where $\zeta(s)$ is the Riemann zeta function and $\Gamma(s)$ is gamma function. The same kind of expansion for $e^{\frac{\pi t}{2}}\Xi(t)$ is studied in \cite{Ku2007} and \cite{Ku2008}. There, the coefficients are given in terms of the Taylor coefficients of elementary functions. On the other hand, our expansion \eqref{intro-MPex} preserves the symmetry of the functional equation of $\Xi(t)$ by the parity of Meixner-Pollaczek polynomials, in other words, denoting the partial sums of \eqref{intro-MPex} by
\begin{equation}
S_n(t)
:=
\sum_{k=0}^{n} a_k q_k^{(\frac32, 0)}(it)
\quad
(n\in \mathbb{Z}, n\geq 0), 
\end{equation} 
we have $S_n(-t)=S_n(t)$. 
Therefore, we expect that \eqref{intro-MPex} is a good expansion of $\Xi(t)$. In fact, as the main theorem of this paper, we show that the polynomial $S_n(t)$ uniformly converges to $\Xi(t)$ as $n\rightarrow \infty$ in every compact set in the critical strip, and then each zero of $\Xi(t)$ is approximated by the zeros of $S_n(t) \; (n\geq 1)$; for $\rho\in \mathbb{C}$, 
\[
\Xi(\rho)=0
\Rightarrow
{}^{\exists}\left\{\rho_n \right\}_{n\geq 1} \text{ s.t. } S_n(\rho_n)=0,  
\;
\lim_{n\rightarrow \infty} \rho_n =\rho. 
\]
This fact suggests us a good approach to investigating the zeros of $\Xi(t)$. We have performed a numerical calculation for observing the location of zeros of $S_n(t)$ in $\mathbb{C}$. The result for $n=260$ is shown in Figure \ref{pseudo spectra 260}. 
\begin{figure}[h]
\centering
\includegraphics[scale=.25]{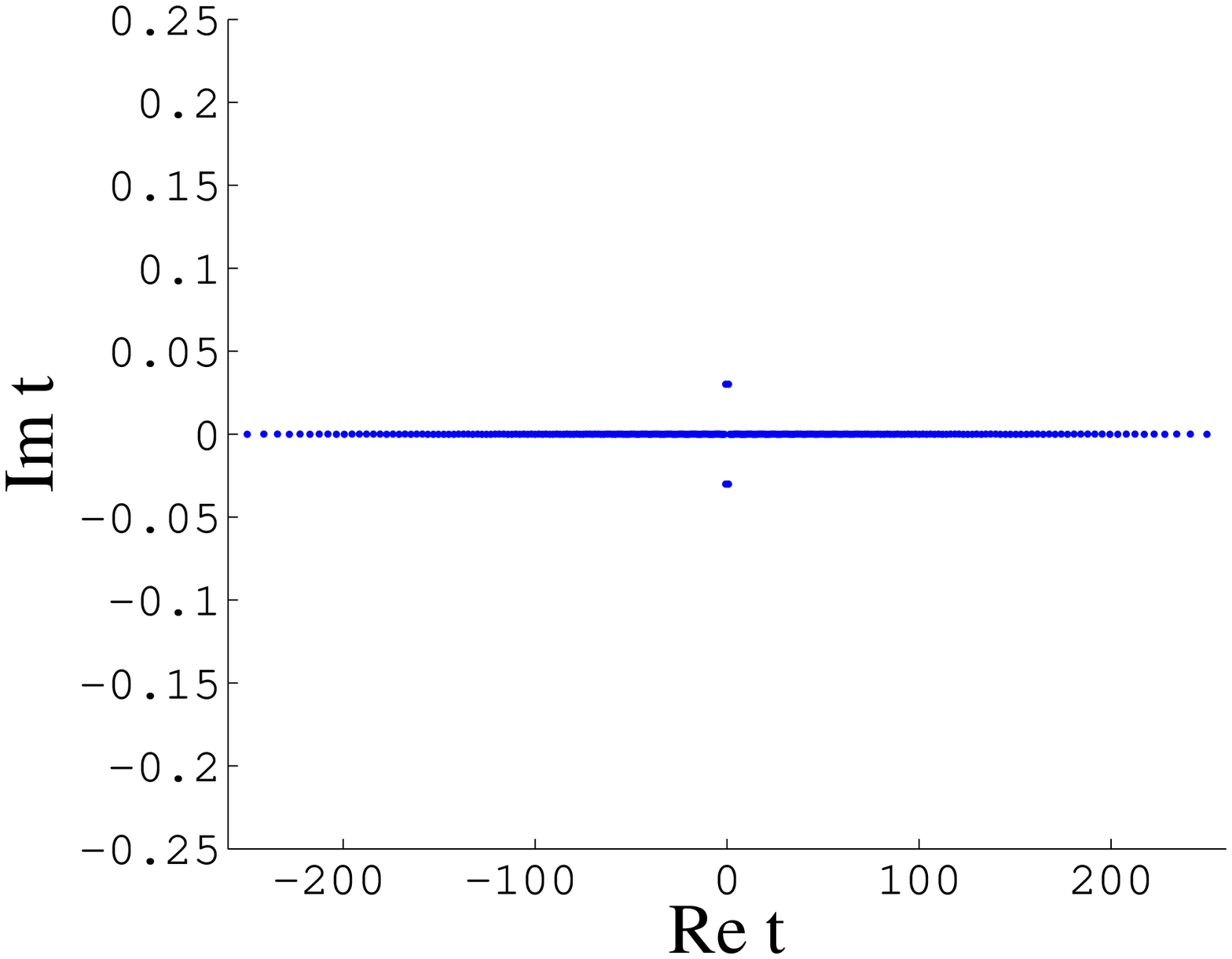}
\includegraphics[scale=.25]{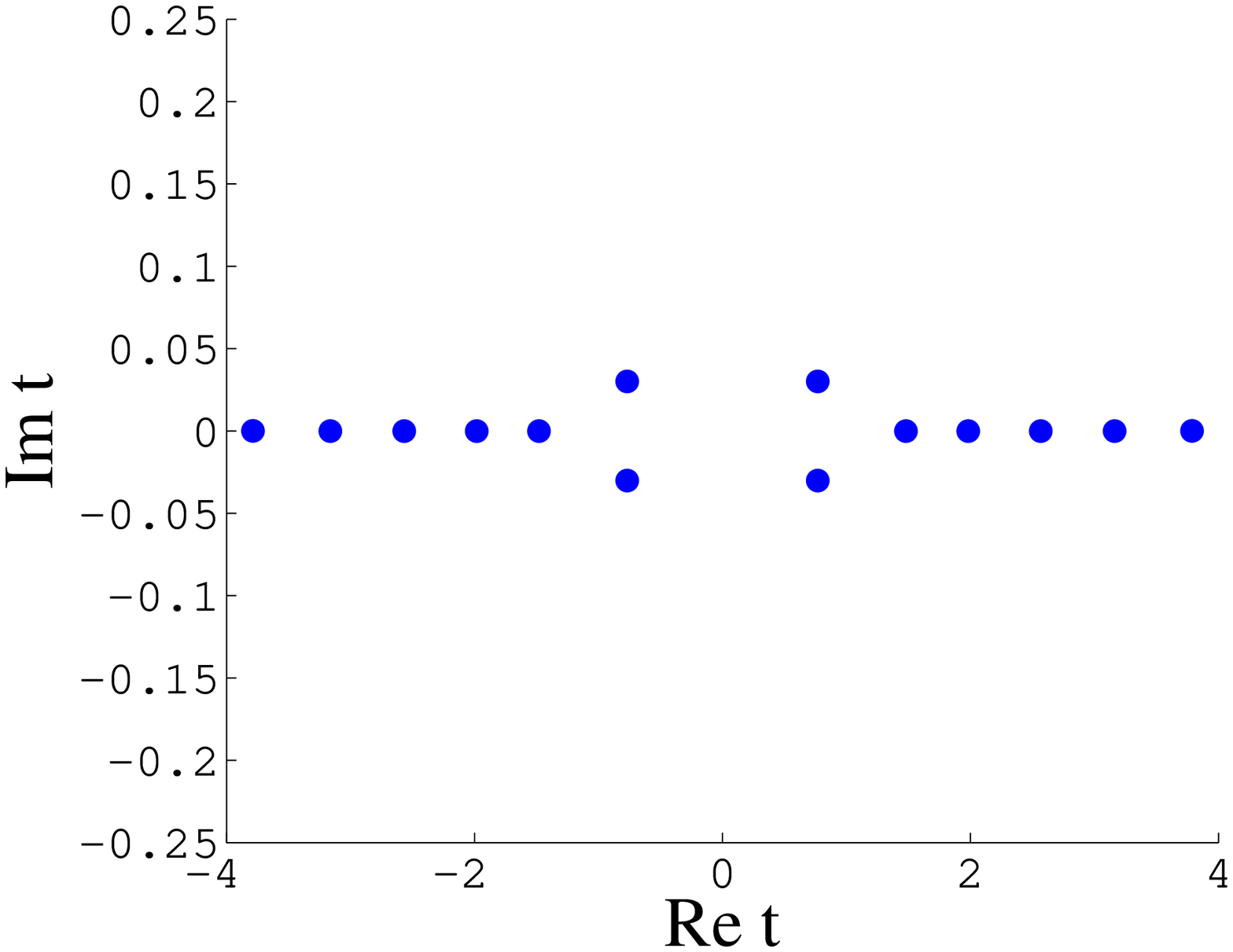}
\includegraphics[scale=.25]{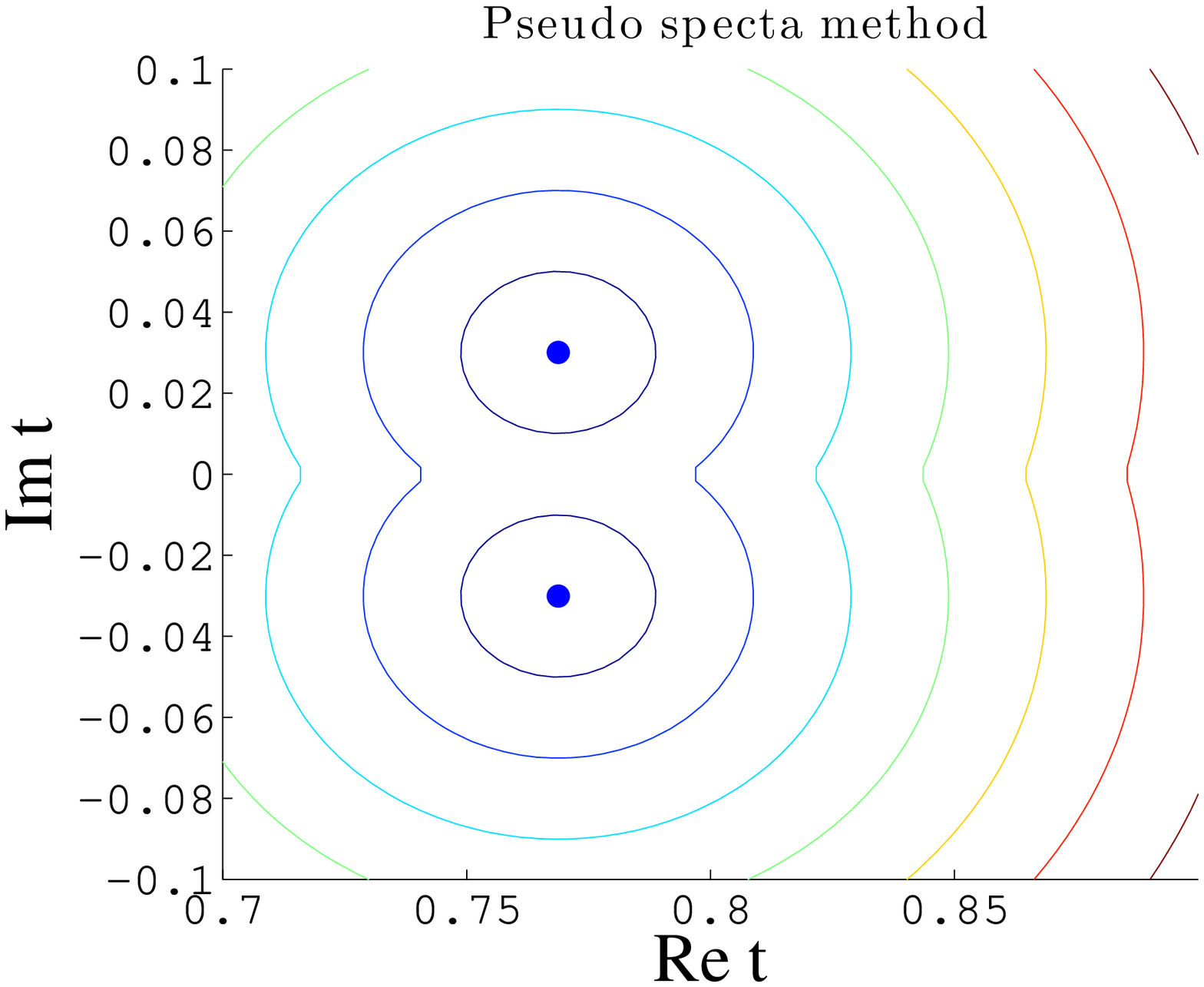}
\caption{left: Distribution of all the zeros of $S_{260}(t)$ in $\mathbb{C}$. center: The close-up at a neighbour of origin. right: The possible location of the complex zeros in $\Re t >0$ up to the estimation error, which is determined by the pseudo spectra method (see \cite{TE2005}). }
\label{pseudo spectra 260}
\end{figure}
We find that almost all the zeros of $S_n(t)$ lie in the real axis (left figure). But there is a set of four complex zeros in a neighbourhood of the origin (center figure). And these complex zeros likely exist according to the estimation error: The right figure shows the possible location of the complex zeros in $\Re t >0$ up to the estimation error. Motivated by this observation, we are interested in studying the expansion \eqref{intro-MPex} in detail and analyse the distribution of zeros of $S_n(t)$ because it would be a new approach for the Riemann hypothesis. 
\\
The paper is organized as follows. For a general function $f(it)\in L^2\left(\mathbb{R},  | \Gamma\left(it+\frac{\nu}{2}\right)|^2dt\right)$, let us call its expansion in $q_k^{(\nu, \theta)}(it)$ the MP-expansion of $f(it)$, and call its coefficients the MP-coefficients of $f(it)$. 
In Section \ref{Uniform convergence of MP-expansions}, we show the compact uniform convergence of MP-expansion for meromorphic functions with some conditions. 
In Section \ref{MP-coefficients}, we compute the MP-coefficients of a specific completed Dirichlet L-function using the results of  \cite{A2003} and \cite{Ku2007}. The result of such a computation is used for numerical calculations of the zeros of $S_n(t)$. 
In Section \ref{Approximating zeros}, as one of observations for distribution of the zeros of $S_n(t)$, we estimate a boundary for these zeros. 

\section{Uniform convergence of MP-expansions} 
  \label{Uniform convergence of MP-expansions}
Consider the MP-expansion of a function $f(it)$ with the parameters $\nu>0, \theta=0$;
\begin{equation}\label{MP-expansion}
f(it)=\sum_{n=0}^{\infty}a_n q_n^{(\nu,0)}(it)
\quad
 (t\in \mathbb{R}).
\end{equation}
Here $\{a_n\}_{n\geq 0}\subset \mathbb{C}$ are the MP-coefficients of $f(it)$. We argue the uniform convergence of this infinite sum. 
\subsection{Asymptotic formula of $q_n^{(\nu,0)}(it)$}
The asymptotic estimation $q_n^{(\nu,0)}(it)=O(n^{\frac{\nu}{2}-1 +|\Im t|})\quad (n\rightarrow \infty)$ is suggested in \cite{FKY1996}. Here $t$ is generally a complex number. The proof is due to the Darboux method, where the analytic property of the generating function is used to estimate the coefficients (see \cite{S1975}). 
\begin{lem}\label{estimate-MP} For fixed $t\in \mathbb{C}$ and sufficiently large $n$, a Meixner-Pollaczek polynomial $q_n^{(\nu,0)}(it)$ is estimated as
\[
q_n^{(\nu,0)}(it) = 2^{it-\frac{\nu}{2}}\frac{(-1)^n}{\Gamma(it+\frac{\nu}{2})}n^{it+\frac{\nu}{2}-1} +2^{-it-\frac{\nu}{2}}\frac{1}{\Gamma(-it+\frac{\nu}{2})}n^{-it+\frac{\nu}{2}-1}+O(n^{\frac{\nu}{2}-2+|\Im t|}).
\]
In particular,
\[
q_n^{(\nu,0)}(it)=O(n^{\frac{\nu}{2}-1 +|\Im t|})\quad (n\rightarrow \infty).
\]
\end{lem}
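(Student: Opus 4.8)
The plan is to apply the Darboux method (see \cite{S1975}) to the generating function of the Meixner--Pollaczek polynomials. First I would record the closed form
\[
G(w):=\sum_{n=0}^{\infty}q_n^{(\nu,0)}(it)\,w^n
=(1+w)^{-it-\frac{\nu}{2}}\,(1-w)^{it-\frac{\nu}{2}}
\qquad(|w|<1),
\]
which is the generating function of the $q_n^{(\nu,0)}(it)$ and can be verified directly from the hypergeometric definition by expanding each factor with the binomial series and collapsing the resulting double sum by Chu--Vandermonde. For $t\in\mathbb{C}$ fixed and with principal branches, $G$ is analytic in the open unit disc and continues analytically across $|w|=1$ except at the two algebraic branch points $w=1$ and $w=-1$.

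Next I would extract the local singular expansions there. Since $(1+w)^{-it-\frac\nu2}$ is analytic near $w=1$ with value $2^{-it-\frac\nu2}$, one has $G(w)=(1-w)^{it-\frac\nu2}\sum_{j\ge0}c_j(1-w)^{j}$ there, with $c_0=2^{-it-\frac\nu2}$; symmetrically $G(w)=(1+w)^{-it-\frac\nu2}\sum_{j\ge0}d_j(1+w)^{j}$ near $w=-1$, with $d_0=2^{it-\frac\nu2}$. Fix an integer $m>|\Im t|+\frac\nu2+2$ and set the comparison function
\[
g_m(w):=(1-w)^{it-\frac\nu2}\sum_{j=0}^{m-1}c_j(1-w)^{j}+(1+w)^{-it-\frac\nu2}\sum_{j=0}^{m-1}d_j(1+w)^{j}.
\]
Then $G-g_m$ is analytic in the open disc; near $w=1$ it equals $(1-w)^{it-\frac\nu2+m}\cdot(\text{analytic})$ plus a function analytic there, and since $\Re(it-\frac\nu2+m)=m-\frac\nu2-\Im t>2$ it is twice continuously differentiable near $w=1$, and likewise near $w=-1$ by symmetry, while it is analytic elsewhere on $|w|=1$. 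Hence $G-g_m$ is $C^2$ up to the boundary, and Darboux's theorem yields $[w^n](G-g_m)=o(n^{-2})=O\bigl(n^{\frac\nu2-2+|\Im t|}\bigr)$.

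It remains to read off the coefficients of $g_m$. Using $[w^n](1-w)^{-\gamma}=\Gamma(n+\gamma)/(\Gamma(\gamma)n!)$, $[w^n](1+w)^{-\delta}=(-1)^n\Gamma(n+\delta)/(\Gamma(\delta)n!)$ and $\Gamma(n+a)/\Gamma(n+1)=n^{a-1}(1+O(1/n))$, the $j=0$ summand at $w=1$ contributes $2^{-it-\frac\nu2}\,n^{-it+\frac\nu2-1}/\Gamma(-it+\frac\nu2)$ and the $j=0$ summand at $w=-1$ contributes $2^{it-\frac\nu2}(-1)^n\,n^{it+\frac\nu2-1}/\Gamma(it+\frac\nu2)$, while every $j\ge1$ summand and every $O(1/n)$ correction is $O\bigl(n^{\frac\nu2-2+|\Im t|}\bigr)$, since $|n^{\pm it+\frac\nu2-1-j}|\le n^{\frac\nu2-2+|\Im t|}$ once $j\ge1$. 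Adding these up gives the two-term formula, and the final $O$-estimate follows since each main term has modulus $O\bigl(n^{\frac\nu2-1+|\Im t|}\bigr)$; the degenerate cases $it+\frac\nu2\in\mathbb{Z}_{\le0}$ or $-it+\frac\nu2\in\mathbb{Z}_{\le0}$ are consistent, the relevant factor of $G$ then being a polynomial contributing nothing to large coefficients, just as $1/\Gamma$ vanishes there. I expect the Darboux step to be the main obstacle — keeping track of how many singular terms must be subtracted so that the remainder is $C^2$ on $|w|=1$ (the threshold grows with $|\Im t|$) and controlling the comparison coefficients with the stated error; this can alternatively be organised through a transfer lemma for algebraic singularities, using that $G$ is analytic in a slit plane, which bounds $[w^n](G-g_1)$ directly from the order of $G-g_1$ at $w=\pm1$.
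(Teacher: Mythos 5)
Your proposal is correct and follows essentially the same route as the paper: both apply the Darboux method to the generating function $(1-w)^{it-\frac{\nu}{2}}(1+w)^{-it-\frac{\nu}{2}}$, subtract truncated local expansions at $w=\pm1$, bound the remainder's Taylor coefficients by smoothness on the unit circle, and read off the two leading terms from the $j=0$ binomial coefficients via Stirling. The only cosmetic difference is that you certify the remainder is $C^2$ to get $o(n^{-2})$, while the paper takes $L$-th partial sums and uses integrability of the $L$-th derivative to get $O(n^{-L})$.
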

\begin{proof}
The generating function of $q_n^{(\nu, 0)}(it)$ is known as
\[
\sum_{n=0}^{\infty}q_n^{(\nu,0)}(it)w^n=(1-w)^{it-\frac{\nu}{2}}(1+w)^{-it-\frac{\nu}{2}}.
\]
We expand this function respectively in the vicinities of its singularities $w=-1$ and $w=1$ on the unit circle. In the vicinity of $w=-1$, we expand as
\begin{align*}
& 2^{ it-\frac{\nu}{2}} \sum_{k=0}^{\infty} \binom{ it-\frac{\nu}{2}}{k}\left(\frac{-1}{2}\right)^k (1+ w)^{k- it-\frac{\nu}{2}},
\\
\intertext{and similarly in the vicinity of $w=1$,}
&2^{- it-\frac{\nu}{2}} \sum_{k=0}^{\infty} \binom{- it-\frac{\nu}{2}}{k}\left(\frac{-1}{2}\right)^k (1- w)^{k+ it-\frac{\nu}{2}}.
\end{align*}
Fix $L\in \mathbb{N}, L>\frac{\nu}{2}$, and put $r_L^{(-1)}(w)$ and $r_L^{(1)}(w)$ to be the $L$th partial sums of these expansions respectively. Consider the difference
\[
H(w)=\sum_{n=0}^{\infty}q_n^{(\nu, 0)}(it)w^n-r_L^{(-1)}(w)-r_L^{(1)}(w).
\]
We see that the $L$th derivative of $H(w)$ is integrable on the unit circle $|w| = 1$. Thus if we expand $H(w)$ in the power series about $w=0$, the coefficients $d_n$ satisfy the condition
\[
d_n = O(n^{-L})
\]
according to the Cauchy's integral formula. Now $q_n^{(\nu, 0)}(it)$ is the sum of $d_n$ and the Taylor coefficients of $r_L^{(-1)}(w)$ and $r_L^{(1)}(w)$ at $w=0$:
\begin{align*}
q_n^{(\nu,0)}(it)=d_n&+2^{it-\frac{\nu}{2}}\sum_{k=0}^L\binom{it-\frac{\nu}{2}}{k}\left(\frac{-1}{2}\right)^k \binom{k-it-\frac{\nu}{2}}{n}
\\
&+2^{-it-\frac{\nu}{2}}\sum_{k=0}^L\binom{-it-\frac{\nu}{2}}{k}\left(\frac{-1}{2}\right)^k \binom{k+it-\frac{\nu}{2}}{n}(-1)^n.
\end{align*}
We can estimate each binomial coefficient by the Stirling formula as $\binom{s}{n}=\frac{(-1)^n}{\Gamma(-s)}\frac{\Gamma(-s+n)}{n!}\sim \frac{(-1)^n}{\Gamma(-s)}n^{-s-1}$, whence we find that the two terms of $k=0$ are  dominant for large $n$. Hence the result follows.
\end{proof}

\subsection{Asymptotic formula for $a_n$}
We denote a modified Mellin transform of a function $\psi \in L^2(\mathbb{R}_{> 0}, u^{\nu-1}du)$ by
\[
\mathcal{M}_{\nu}(\psi)(s)
:=
\frac{1}{\Gamma(s+\frac{\nu}{2})}
\int_0^{\infty} \psi(u) u^{s+\frac{\nu}{2}-1} du
\quad
(s\in i \mathbb{R}).
\]
The Plancherel formula of the Mellin transform is equivalent to following isomorphism between two Hilbert spaces
\[
L^2(\mathbb{R}_{> 0}, \frac{2^{\nu}}{\Gamma(\nu)}u^{\nu-1}du) \xrightarrow{\mathcal{M}_{\nu}}L^2(\mathbb{R},M_{\nu}(dt)),
\quad
M_{\nu}(dt):=\frac{1}{2\pi}\frac{2^{\nu}}{\Gamma(\nu)}|\Gamma(it+\frac{\nu}{2})|^2 dt. 
\]
The Meixner-Pollaczek polynomials $\{q_n^{(\nu, \theta)}(it)\}_{n\geq 0}$ are orthonormal basis of $L^2(\mathbb{R},M_{\nu}(dt))$, and each norm is $\int_{\mathbb{R}}|q_n^{(\nu, \theta)}(it)|^2M_{\nu}(dt)=\frac{(\nu)_n}{n!}$. Here $(\nu)_n:=\nu(\nu+1)\cdots (\nu+n-1)$ is the Pochhammer symbol. The image of $q_n^{(\nu, \theta)}(it)$ under the inverse modified Mellin transform $\mathcal{M}_{\nu}^{-1}$ is the Laguerre function $\psi_n^{(\nu, \theta)}(u):=e^{-u}L_n^{(\nu-1)}(2u)$. Recall the difference equation of the Meixner-Pollaczek polynomials
\[
D_{\nu}^{\mathrm{MP}} q_n^{(\nu, 0)}(s)=2n q_n^{(\nu, 0)}(s), 
\]
\[
D_{\nu }^{\mathrm{MP}} f(s):=\bigl(s+\frac{\nu}{2}\bigr)
\big\{f(s+1)-f(s)\big\} 
-\bigl(s-\frac{\nu}{2}\bigr)\big\{f(s-1)-f(s)\big\},
\]
and the differential equation of the Laguerre function
\[
D_{\nu}^{\mathrm{L}} \psi_n^{(\nu)}(u)=2n \psi_n^{(\nu)}(u), 
\]
\[
D_{\nu }^{\mathrm{L}}=-u\frac{d^2}{du^2}-\nu \frac{d}{du}+u-\nu. 
\]
We notice that they are equivalent by the relation $\mathcal{M}_{\nu}D_{\nu}^{\mathrm{L}}=D_{\nu}^{\mathrm{MP}}\mathcal{M}_{\nu}$. 
\begin{lem}\label{coefficients in MP-ex}
Take $\alpha, \beta, \delta \in \mathbb{R}$, and $s_-,s_+\in \mathbb{C}$ so that $\alpha+2<\Re(s_-)<0<\Re(s_+)<\beta-2$, $\delta>0$.
Let $f(s)$ be a meromorphic function on $D=\{s\in \mathbb{C}|\Re(s)\in(\alpha-\delta, \beta+\delta)\}$ satisfying $f(s)=O(e^{r|s|})\; (s\rightarrow \infty, s\in D)$ with $r<\frac{\pi}{2}$. Assume that $f(s)$ has only one or two poles at $s=s_+, s_-\in D$, and each of them is simple. Then the coefficients $a_n\; (n \geq 0)$ in \eqref{MP-expansion} have the asymptotic formula
\[
a_n\sim A_- n^{s_- -\frac{\nu}{2}}+(-1)^{n+1}A_+ n^{-s_+ -\frac{\nu}{2}}
\quad (n\rightarrow \infty),
\] 
where
$A_{-}= 2^{ s_{-}+\frac{\nu}{2}}\mathrm{Res}(f, s_-)\Gamma(-s_-+\frac{\nu}{2})$, and 
$A_{+}= 2^{ -s_{+}+\frac{\nu}{2}}\mathrm{Res}(f, s_+)\Gamma(s_++\frac{\nu}{2})$. In above formula, we take only $n\geq 0$ such that the right hand side dose not vanish. 
\end{lem}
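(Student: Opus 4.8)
The plan is to pass through the modified Mellin transform $\mathcal{M}_\nu$ and compute $a_n$ as a Mellin--Barnes type integral whose asymptotics are governed by the poles of $f$. Concretely, write $\psi(u)=\mathcal{M}_\nu^{-1}(f)(u)$, so that by the orthonormality of the Laguerre functions $\psi_n^{(\nu)}(u)=e^{-u}L_n^{(\nu-1)}(2u)$ together with the Plancherel isomorphism, the MP-coefficient is
\[
a_n=\frac{n!}{(\nu)_n}\int_0^\infty \psi(u)\,\psi_n^{(\nu)}(u)\,\frac{2^\nu}{\Gamma(\nu)}u^{\nu-1}\,du
=\frac{n!}{(\nu)_n}\,\langle f, q_n^{(\nu,0)}\rangle_{M_\nu}.
\]
First I would recover $\psi$ as an inverse Mellin transform of $\Gamma(s+\tfrac\nu2)f(s)$ along the line $\Re s=0$; the growth hypothesis $f(s)=O(e^{r|s|})$ with $r<\tfrac\pi2$ ensures this integral converges absolutely against the exponential decay of $\Gamma$ on vertical lines and defines a genuine function in $L^2(\mathbb{R}_{>0},u^{\nu-1}du)$, so that the expansion \eqref{MP-expansion} is legitimate to begin with.

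Next I would insert the known generating-function / integral representation for the Laguerre functions to evaluate $\int_0^\infty \psi(u)\psi_n^{(\nu)}(u)u^{\nu-1}du$, or equivalently use the Mellin transform of $\psi_n^{(\nu)}$ itself, which is an explicit ratio of gamma functions times a terminating ${}_2F_1$ (this is essentially the statement $\mathcal{M}_\nu\psi_n^{(\nu)}=q_n^{(\nu,0)}$ evaluated on the critical line). Combining, $a_n$ becomes a contour integral over $\Re s=0$ of $f(s)$ against $q_n^{(\nu,0)}(is)\,M_\nu$-type weight. Then I would move the contour: shift $\Re s=0$ to the right past the simple pole at $s_+$ and to the left past $s_-$, picking up residues $-2\pi i\,\mathrm{Res}(f,s_\pm)$ times the corresponding value of the integrand, while the shifted-line integrals are lower order. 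The residue at $s_-$, after using the asymptotic $q_n^{(\nu,0)}(is_-)\sim 2^{s_-+\nu/2}\Gamma(-s_-+\tfrac\nu2)^{-1}n^{\cdots}+\cdots$ from Lemma \ref{estimate-MP} — or rather the dual statement for the Mellin kernel — produces the term $A_-n^{s_--\nu/2}$, and similarly the pole at $s_+$ gives $(-1)^{n+1}A_+n^{-s_+-\nu/2}$, the sign $(-1)^n$ and the minus coming from the orientation of the right-hand shift and the parity factor in the Laguerre Mellin kernel. The constraints $\alpha+2<\Re(s_-)<0<\Re(s_+)<\beta-2$ are exactly what is needed so that the shifted contours stay inside $D$ and so that the remainder integrals decay faster than $n^{\Re s_\pm-\nu/2}$ by a margin (the "$+2$" buffer matching the $O(n^{\nu/2-2+|\Im t|})$ error in Lemma \ref{estimate-MP}).

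The main obstacle I anticipate is making the contour shift rigorous: one must justify that the horizontal pieces connecting $\Re s=0$ to $\Re s=\Re s_\pm$ at height $\pm T$ vanish as $T\to\infty$ (using the $e^{r|s|}$, $r<\tfrac\pi2$ bound against the exponential decay of $|\Gamma(s+\tfrac\nu2)|$ and the polynomial-in-$s$, exponential-in-$n$ — but here $n$ is fixed — behaviour of the Laguerre Mellin kernel), and that the shifted vertical integral, after dividing by $n^{\Re s_\pm-\nu/2}$, tends to $0$; this last point is where the precise asymptotics of $q_n^{(\nu,0)}$ on vertical lines, uniform in $\Im s$ on compacta and with controlled growth in $\Im s$, must be invoked, so a uniform version of Lemma \ref{estimate-MP} (or a direct estimate of the Laguerre kernel's Mellin transform) is the real technical heart. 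The bookkeeping of constants $A_\pm$ and the sign $(-1)^{n+1}$ is then a routine residue computation, and the final clause about discarding $n$ for which the right-hand side vanishes simply handles the degenerate cancellations between the two leading terms.
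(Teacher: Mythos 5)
Your reduction of $a_n$ to a single contour integral,
\[
a_n=\frac{n!}{(\nu)_n}\,\frac{2^{\nu}}{\Gamma(\nu)}\,\frac{1}{2\pi i}\int_{\Re s=0}f(s)\,q_n^{(\nu,0)}(-s)\,\Gamma\bigl(s+\tfrac{\nu}{2}\bigr)\Gamma\bigl(-s+\tfrac{\nu}{2}\bigr)\,ds,
\]
is fine, but the contour-shifting step as you describe it does not work, and this is a genuine gap rather than bookkeeping. A single contour cannot be moved ``to the right past $s_+$ and to the left past $s_-$'' at once; and, more seriously, the kernel $q_n^{(\nu,0)}(-s)$ grows like $n^{|\Re s|+\frac{\nu}{2}-1}$ as soon as you leave the imaginary axis (Lemma \ref{estimate-MP}), so shifting the unsplit integral in either direction makes the remainder line-integral \emph{larger} than the main term, not smaller. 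Correspondingly, the residue of the full integrand at $s_\pm$ contains \emph{both} Darboux branches of $q_n^{(\nu,0)}(-s_\pm)$, and the dominant one contributes a spurious term of size $n^{|\Re s_\pm|-\frac{\nu}{2}}$ which is absent from the asserted asymptotics and must cancel against the shifted-line integral — so ``the residue at $s_\pm$ produces $A_\mp n^{\mp s_\pm-\nu/2}$'' is false as stated. The missing idea is to substitute the two-term expansion of Lemma \ref{estimate-MP} for $q_n^{(\nu,0)}(-s)$ \emph{inside} the integral first: the weight gammas then cancel branchwise and (up to the explicit prefactor $\sim 2^{\nu}n^{-\nu/2}$) one is left with two genuine Mellin integrals, $(-1)^n\frac{1}{2\pi i}\int 2^{-s-\nu/2}f(s)\Gamma(s+\tfrac{\nu}{2})\,n^{-s}\,ds$ and $\frac{1}{2\pi i}\int 2^{s-\nu/2}f(s)\Gamma(-s+\tfrac{\nu}{2})\,n^{s}\,ds$, which must be shifted in \emph{opposite} directions — the first rightward past $s_+$, the second leftward past $s_-$. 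Only then do the residues give exactly $(-1)^{n+1}A_+n^{-s_+-\nu/2}$ and $A_-n^{s_--\nu/2}$. Even with this repair you still need (i) a version of Lemma \ref{estimate-MP} uniform and integrable in $\Im s$ along the line (the lemma is stated for fixed argument), and (ii) higher-order Darboux terms when $|\Re s_\pm|\ge 1$, since the $O(n^{\nu/2-2})$ error on $\Re s=0$ only yields $O(n^{-\nu/2-1})$ for $a_n$, which can dominate $n^{-|\Re s_\pm|-\nu/2}$.

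For comparison, the paper takes a different route that sidesteps these cancellations: it subtracts from $f$ two explicit gamma-quotients $p_\pm(s)$ having the same simple poles, shows that $g=f-(p_-+p_+)$ satisfies $(D_{\nu}^{\mathrm{MP}})^m g\in L^2(\mathbb{R},M_\nu(dt))$ by asymptotic expansion of $\mathcal{M}_\nu^{-1}g$ at $u=0$ and $u=\infty$ (so the coefficients of $g$ decay fast), computes the coefficients of $p_\pm$ in closed form via Barnes's integral, the Pfaff transform and Gauss summation, and finally bootstraps the error estimate. Your approach, once repaired by the branch-splitting above, is a legitimate alternative in the spirit of the Mellin-asymptotics method of Flajolet--Gourdon--Dumas, but as written the central analytic step fails.
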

\begin{figure}[h]
\centering
\includegraphics[scale=1.5]{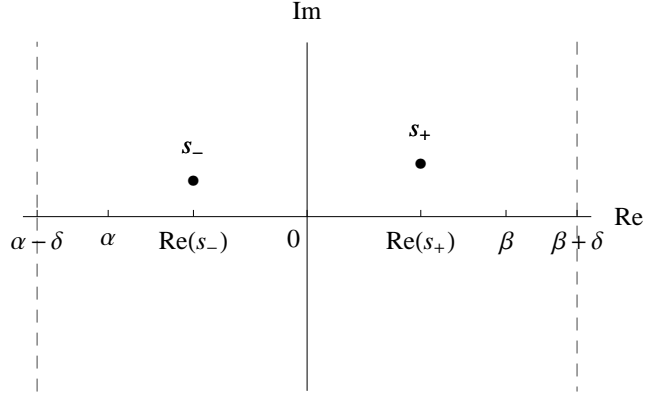}
\caption{The strip $D$, bounded by the two dashed lines. }
\end{figure}
\begin{proof}
Put 
\[
p_-(s):=\frac{\Gamma(s-s_-)}{\Gamma(s+\frac{\nu}{2})}\Gamma\left(s_-+\frac{\nu}{2}\right)\times\text{Res}(f,s_-),
\quad
p_+(s):=-\frac{\Gamma(-s+s_+)}{\Gamma(-s+\frac{\nu}{2})}\Gamma\left(-s_++\frac{\nu}{2}\right)\times\text{Res}(f,s_+)
\]
and
\[
g(s):=f(s)-(p_-(s)+p_+(s)).
\]
We claim that $(D_{\nu}^{\mathrm{MP}})^mg\in L^2(\mathbb{R},M_{\nu}(dt))$ for $m:=\min\{\lfloor\Re(s_+)\rfloor,\lfloor-\Re(s_-)\rfloor\}+1$. 
Consider the inverse of the modified Mellin transform
\begin{equation}\label{ghat}
\widehat{g}(u):=(\mathcal{M}_{\nu}^{-1}g)(u)=u^{-\frac{\nu}{2}}\frac{1}{2\pi i}\int_{\Re s=0}\Gamma\left(s+\frac{\nu}{2}\right)g(s)u^{-s}ds.
\end{equation}
By the asymptotic condition of $f(s)$, we see that $\widehat{g}\in L^2(\mathbb{R}_{>0}, u^{\nu-1}du) \cap C^{\infty}(\mathbb{R}_{>0})$. 
Each pole of the integrand in \eqref{ghat} on a region $D_-:=D\cap \{\Re(s)<0\}$ is contained in 
\[
\{-\frac{\nu}{2}-n\in D_- |\; n\in \mathbb{Z}, n \geq0\}
\cup
 \{s_--n\in D_-| \; n\in \mathbb{Z}, n\geq1\}.
\]
Thus by changing the path of the integral to $\Re(s)=\alpha$ and by Cauchy's theorem, $\widehat{g}(u)$ admits the asymptotic expansion at $u=+0$:
\begin{align}\label{asymptotic of ghat}
\widehat{g}(u)&=u^{-\frac{\nu}{2}}\left\{\sum_{\stackrel{n \geq0}{-\frac{\nu}{2}-n\in D_-}}c_-(n)u^{\frac{\nu}{2}+n}+\sum_{\stackrel{n\geq1}{s_--n\in D_-}}d_-(n)u^{n-s_-}+O(u^{-\alpha})\right\},
\end{align}
where each coefficient $c_-(n), d_-(n) \in \mathbb{C}$ is the corresponding residue of the integrand (see \cite{FGD1995}).
By the l'H\^{o}pital's rule, we have $k$-th derivative of \eqref{asymptotic of ghat} for $1\leq k \leq m$ as
\[
\left(\frac{d}{du}\right)^k\widehat{g}(u)
=
\sum_{\stackrel{n\geq k}{-\frac{\nu}{2}-n\in D_-}}c_- ^{(1)}(n)u^{n-k}+\sum_{\stackrel{n\geq1}{s_--n\in D_-}}d_- ^{(1)}(n)u^{-\frac{\nu}{2}+n-s_- -k}+O(u^{-\alpha-k-\frac{\nu}{2}})
\]
with $c_- ^{(1)}(n), d_- ^{(1)}(n) \in \mathbb{C}$. Therefore we have the asymptotic expansion of $(D_{\nu}^{\mathrm{L}})^m \widehat{g}(u)$ at $u=+0$ as
\[
(D_{\nu}^{\mathrm{L}})^m \widehat{g}(u)
=
\sum_{\stackrel{n\geq m}{-\frac{\nu}{2}-n\in D_-}}c_- ^{(2)}(n)u^{n-m}+\sum_{\stackrel{n\geq1}{s_--n\in D_-}}d_- ^{(2)}(n)u^{-\frac{\nu}{2}+n-s_- -m}+O(u^{-\alpha-m-\frac{\nu}{2}}).
\]
By the definition of $m$ and the fact that $\widehat{g}\in C^{\infty}(\mathbb{R}_{>0})$, we see that
\begin{equation}\label{behaviour at 0 of ghat}
\int_0^1 |(D_{\nu}^{\mathrm{L}})^m \widehat{g}(u)|^2 u^{\nu-1}du<\infty.
\end{equation}
On the other hand, each pole of the integrand in \eqref{ghat} on a region $D_+:=D\cap \{\Re(s)>0\}$ is contained in $\{s_+ +n\in D_+; n\in \mathbb{Z}, n\geq1\}$. 
Similarly, by changing the path of the integral to $\Re(s)=\beta$ (or by \cite{FGD1995} again), $\widehat{g}(u)$ admits the asymptotic expansion at $u=+\infty$:
\begin{equation*}
\widehat{g}(u)
=u^{-\frac{\nu}{2}}\left\{\sum_{\stackrel{n\geq1}{s_+ +n\in D_+}}d_+(n)u^{-n-s_+}+O(u^{-\beta})\right\},
\end{equation*}
where each coefficient $c_+(n), d_+(n) \in \mathbb{C}$ is the corresponding residue of the integrand. Then we can get the asymptotic expansion of $(D_{\nu}^{\mathrm{L}})^m \widehat{g}(u)$ at $u=+\infty$ in the same way as above, and see that 
\begin{equation}\label{behaviour at inf of ghat}
\int_1^{\infty} |(D_{\nu}^{\mathrm{L}})^m \widehat{g}(u)|^2 u^{\nu-1}du<\infty.
\end{equation}
Combining \eqref{behaviour at 0 of ghat} and \eqref{behaviour at inf of ghat}, we see that $(D_{\nu}^{\mathrm{L}})^m \widehat{g}(u)$ is contained in $L^2(\mathbb{R}_{>0}, u^{\nu-1}du)$. This proves our claim that $(D_{\nu}^{\mathrm{MP}})^m g(s)\in L^2(\mathbb{R},M_{\nu}(dt))$ by the equivalence $\mathcal{M}_{\nu}D_{\nu}^{\mathrm{L}}=D_{\nu}^{\mathrm{MP}}\mathcal{M}_{\nu}$. Now if we consider the expansion of $p_-(it)+p_+(it) \in L^2(\mathbb{R},M_{\nu}(dt))$
\begin{equation}\label{coeff a'n}
p_-(it)+p_+(it)=\sum_{n=0}a'_n q_n^{(\nu,0)}(it)
\quad (t\in \mathbb{R}),
\end{equation}
with $a'_n\in \mathbb{C}\;(n\geq0)$, then we see that
\[
\sum_{n=0}^{\infty}\frac{(\nu)_n}{n!}| (2n)^m (a_n-a'_n)|^2=\|(D_{\nu}^{\mathrm{MP}})^m g(s)\|^2 <\infty,
\]
where $\|\cdot\|$ is the norm defined by the measure $M_{\nu}(dt)$ on $\mathbb{R}$. Therefore $a_n-a'_n$ is estimated as
\begin{equation}\label{an-a'n}
a_n-a'_n=o(n^{-\frac{\nu-1}{2}-m})\quad (n\rightarrow \infty).
\end{equation}
For the rest, we calculate the coefficient $a'_n$ in \eqref{coeff a'n} by the orthogonality of the Meixner-Pollaczek polynomials $q_n^{(\nu,0)}(it)$ as
\begin{equation}\label{integral for a'n}
a'_n=\frac{n!}{(\nu)_n}
\int_{\mathbb{R}}\left(p_-(it)+p_+(it)\right)\overline{q_n^{(\nu,0)}(it)}M_{\nu}(dt),
\end{equation}
where $M_{\nu}(dt)=\frac{1}{2\pi}\frac{2^{\nu}}{\Gamma(\nu)}|\Gamma\left(it+\frac{\nu}{2}\right)|^2dt$. 
To calculate the first term of this integral, $\int_{\mathbb{R}}p_-(it)\overline{q_n^{(\nu,0)}(it)}M_{\nu}(dt)$, we use the expansion
\[
\overline{q_n^{(\nu,0)}(it)}=\frac{(\nu)_n}{n!}\sum_{k=0}^n \frac{(-n)_k }{(\nu)_k k!}\frac{\Gamma(-it+\frac{\nu}{2}+k)}{\Gamma(-it+\frac{\nu}{2})}2^k, 
\]
so that it is a linear combination of 
\begin{equation}\label{Gamma integral}
\int_{\mathbb{R}}\Gamma(it-s_-)\Gamma(-it+\frac{\nu}{2}+k)dt,
\quad
(0\leq k \leq n).
\end{equation}
Using an integral representation of hypergeometric function (Barnes's theorem, see \cite{AAR1999})
$$
\frac{\Gamma(a)\Gamma(b)}{\Gamma(c)}
{}_2F_1
\left(
\begin{array}{c}
a,\; b  \\
 c
\end{array}
; x\right)
 = \frac1{2\pi i} \int_{-i\infty}^{i\infty}\frac{\Gamma(a+s)\Gamma(b+s)\Gamma(-s)}{\Gamma(c+s)}(-x)^s \,ds
\quad 
(|\arg(-x)|<\pi)
$$
for $(a,b,c,x)=(-s_-+\frac{\nu}{2}+k,b,b,-1)\;(b>0)$, 
and the Pfaff transform
$$
{}_2F_1
\left(
\begin{array}{c}
a,\; b  \\
 c
\end{array}
; x\right)
= (1-x)^{-a} 
{}_2F_1
\left(
\begin{array}{c}
a,\; c-b  \\
 c
\end{array}
; \frac{x}{x-1}\right),
$$
we can calculate above integral \eqref{Gamma integral} and get
\begin{align}
\int_{\mathbb{R}}p_-(it)\overline{q_n^{(\nu,0)}(it)}M_{\nu}(dt)=&
\frac{2^{s_- +\frac{\nu}{2}}}{\Gamma(\nu)} 
\Gamma\left(s_-+\frac{\nu}{2}\right)\Gamma\left(-s_- +\frac{\nu}{2}\right)\text{Res}(f,s_-) \notag
\frac{(\nu)_n}{n!}
{}_2F_1
\left(
\begin{array}{c}
-n,\; -s_- +\frac{\nu}{2}  \\
\nu 
\end{array}
;1 \right)   \notag
\\
=&\frac{2^{s_- +\frac{\nu}{2}}}{\Gamma(\nu)}
\Gamma\left(-s_- +\frac{\nu}{2}\right)\text{Res}(f,s_-)
\frac{1}{n!}\Gamma\left(s_- +\frac{\nu}{2}+n\right). \label{integral of p-}
\end{align}
For the last equation we used following the Gauss summation formula (see \cite{AAR1999})
\[
\sum_{n=0}^{\infty}\frac{(a)_n (b)_n}{n!(c)_n}=
\frac{\Gamma(c)\Gamma(c-a-b)}{\Gamma(c-a)\Gamma(c-b)}, \quad \text{for } \Re(c-a-b)>0.
\]
For the other term of $p_+(it)$ in \eqref{integral for a'n}, we use the parity $\overline{q_n^{(\nu,0)}(it)}=q_n^{(\nu,0)}(-it)=(-1)^nq_n^{(\nu,0)}(it)$ and expand it into the Gamma functions to obtain
\begin{equation}\label{integral of p+}
\int_{\mathbb{R}}p_+(it)\overline{q_n^{(\nu,0)}(it)}M_{\nu}(dt)=
(-1)^{n+1}\frac{2^{\nu}}{\Gamma(\nu)} 2^{-s_+ -\frac{\nu}{2}}
\Gamma\left(s_+ +\frac{\nu}{2}\right)\text{Res}(f,s_+)
\frac{1}{n!}\Gamma\left(-s_+ +\frac{\nu}{2}+n\right),
\end{equation}
similarly as for $p_{-}(it)$.
By \eqref{integral for a'n}, \eqref{integral of p-} and \eqref{integral of p+} we have
\begin{align*}
a'_n=\frac{2^{\nu/2}}{\Gamma(\nu+n)}&\left\{2^{s_-}\text{Res}(f,s_-)\Gamma\left(-s_-+\frac{\nu}{2}\right)\Gamma\left(s_-+\frac{\nu}{2}+n\right)\right.
\\
&\left.+(-1)^{n+1}2^{-s_+}\text{Res}(f,s_+)
\Gamma\left(s_+ +\frac{\nu}{2}\right)
\Gamma\left(-s_+ +\frac{\nu}{2}+n\right)\right\}.
\end{align*}
Then we get the asymptotic formula $a'_n\sim A_- n^{s_- -\frac{\nu}{2}}+(-1)^{n+1}A_+ n^{-s_+ -\frac{\nu}{2}}\;(n\rightarrow \infty)$ by the Stirling formula. With the estimation \eqref{an-a'n} of $a_n-a'_n$, we get
\begin{align}\label{an estimate of an}
a_n=(a_n-a'_n)+a'_n
=
o(n^{-m-\frac{\nu-1}{2}})+O(n^{s_- -\frac{\nu}{2}}+n^{-s_+ -\frac{\nu}{2}})
=
O(n^{-l-\frac{\nu}{2}})
\quad (n\rightarrow \infty),
\end{align}
with $l=\min\left(\lfloor|\Re(s_{\pm})|\rfloor+\frac12,|\Re(s_{\pm})|\right)>0$. We obtained an estimate \eqref{an estimate of an} of the coefficients $a_n$ of given function $f(s)$, but now applying the same argument to the function $g(it)=\sum (a_n-a'_n)q_n^{(\nu,0)}(it)$, we get
\begin{equation}\label{an-an' again}
a_n-a'_n=O(n^{-l'-\frac{\nu}{2}})
\quad (n\rightarrow \infty),
\end{equation}
with $l'=\min\left(\lfloor|\Re(s_{\pm})|+1\rfloor+\frac12,|\Re(s_{\pm})|+1\right)=l+1$. By \eqref{an-an' again} and the asymptotic formula for $a'_n$, we get $a_n\sim A_- n^{s_- -\frac{\nu}{2}}+(-1)^{n+1}A_+n^{-s_+ -\frac{\nu}{2}}$ for $n\geq 0$ such that the right hand side dose not vanish. 
\end{proof}

Combining Lemma \ref{estimate-MP} and Lemma \ref{coefficients in MP-ex}, we have following estimation
\[
a_n q_n^{(\nu,0)}(it)=O(n^{-\min\{|\Re s_+|, |\Re s_-|\}+|\Im(t)|-1}).
\]
Then it follows that the MP-expansion of a meromorphic function with some conditions converges uniformly in every compact set in the strip sided by the nearest poles from the imaginary axis. Furthermore, we notice that the convergence is independent of $\nu$. 
\begin{thm}\label{uniform convergence of MP-ex}
Take $\alpha, \beta, \delta \in \mathbb{R}$, and $s_-,s_+\in \mathbb{C}$ so that $\alpha+2<\Re(s_-)<0<\Re(s_+)<\beta-2$, $\delta>0$.
Let $f(s)$ be a meromorphic function on $D=\{s\in \mathbb{C}|\Re(s)\in(\alpha-\delta, \beta+\delta)\}$ satisfying $f(s)=O(e^{r|s|})\; (s\rightarrow \infty, s\in D)$ with $r<\frac{\pi}{2}$. Assume that $f(s)$ has only one or two poles at $s=s_+, s_-\in D$, and each of them is simple. 
Then the MP-expansion \eqref{MP-expansion} converges to $f(it)$ uniformly in every compact set in the strip
\[
E(f):=\left\{t\in \mathbb{C};\; |\Im(t)|<\min\{|\Re s_+|, |\Re s_-|\}\right\}.
\]
\end{thm}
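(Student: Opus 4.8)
The plan is to convert the two preceding lemmas into a termwise size bound on $a_n q_n^{(\nu,0)}(it)$, deduce uniform convergence on compact subsets of $E(f)$ from the Weierstrass $M$-test, and then identify the resulting holomorphic limit with $f(it)$ by analytic continuation from the real axis.

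First I would fix a compact set $K\subset E(f)$, put $\mu:=\min\{|\Re s_+|,|\Re s_-|\}$, and note $\sigma:=\sup_{t\in K}|\Im t|<\mu$. From Lemma~\ref{coefficients in MP-ex}, since $\Re s_-<0<\Re s_+$, one gets $|a_n|=O(n^{-\mu-\nu/2})$; from Lemma~\ref{estimate-MP}, read in a form locally uniform in $t$, one gets $q_n^{(\nu,0)}(it)=O(n^{\nu/2-1+|\Im t|})$ uniformly for $t\in K$. Here I would observe that the $O$-constants arising in the proof of Lemma~\ref{estimate-MP} from the Darboux expansion and from Stirling's formula are locally bounded in $t$, so this refinement costs nothing. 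Multiplying the two estimates gives $|a_n q_n^{(\nu,0)}(it)|\le C_K\, n^{\sigma-\mu-1}$ for all $t\in K$ and all large $n$ --- precisely the estimate recorded just before the theorem.

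Since $\sigma-\mu-1<-1$, the numerical series $\sum_n C_K\, n^{\sigma-\mu-1}$ converges, so the Weierstrass $M$-test yields uniform convergence of $\sum_{n\ge0}a_n q_n^{(\nu,0)}(it)$ on $K$. As $K$ is an arbitrary compact subset of the open connected strip $E(f)$ and each partial sum $S_n(t)$ is entire in $t$, the limit $F(t):=\sum_{n\ge0}a_n q_n^{(\nu,0)}(it)$ is holomorphic on $E(f)$. It then remains to show $F(t)=f(it)$ throughout $E(f)$. Since the poles $s_\pm$ of $f$ satisfy $|\Re s_\pm|\ge\mu$, the function $t\mapsto f(it)$ is holomorphic on $E(f)$; moreover the growth hypothesis $f(s)=O(e^{r|s|})$ with $r<\tfrac{\pi}{2}$, together with $|\Gamma(it+\tfrac{\nu}{2})|^2=O(e^{-\pi|t|}|t|^{\nu-1})$, shows $f(it)\in L^2(\mathbb{R},M_\nu(dt))$, so the MP-coefficients $a_n$ are the genuine Fourier coefficients of $f(it)$ against the orthogonal basis $\{q_n^{(\nu,0)}\}$ and the MP-expansion converges to $f(it)$ in $L^2(\mathbb{R},M_\nu(dt))$. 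Combined with the uniform convergence on compact subsets of $\mathbb{R}\subset E(f)$, this forces $F|_{\mathbb{R}}$ to be continuous and to agree almost everywhere --- hence, by continuity, everywhere --- with $f(it)$ on $\mathbb{R}$. Two functions holomorphic on the connected open set $E(f)$ that coincide on $\mathbb{R}$ coincide on all of $E(f)$ by the identity theorem, which is the assertion.

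I expect the main obstacle to be the two routine-looking but essential verifications: the locally-uniform-in-$t$ version of Lemma~\ref{estimate-MP}, and the membership $f(it)\in L^2(\mathbb{R},M_\nu(dt))$ that legitimizes identifying the $L^2$-limit with $f(it)$ before passing to the holomorphic extension. Once Lemmas~\ref{estimate-MP} and~\ref{coefficients in MP-ex} are available the uniform convergence itself is immediate, so all the real content is in correctly pinning down what the limit is.
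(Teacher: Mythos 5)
Your proposal is correct and follows essentially the same route as the paper, which likewise just combines Lemma~\ref{estimate-MP} and Lemma~\ref{coefficients in MP-ex} into the termwise bound $a_n q_n^{(\nu,0)}(it)=O(n^{|\Im t|-\min\{|\Re s_+|,|\Re s_-|\}-1})$ and concludes by the $M$-test. The two points you flag as needing care --- local uniformity in $t$ of the constants in Lemma~\ref{estimate-MP}, and the $L^2$/identity-theorem argument pinning the holomorphic limit to $f(it)$ --- are exactly the steps the paper leaves implicit, and your treatment of them is sound.
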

The limit of any sequence of polynomials converging uniformly is a holomorphic function in the converging region. On the other hand, we showed that the converse is also true when we consider the MP-expansion of a holomorphic function with some conditions in a strip. This fact gives us a natural reason to consider the MP-expansion of the completed Riemann zeta function and other $L$-functions, not only because of the parity of the Meixner-Pollaczek polynomials.

\section{MP-coefficients}
  \label{MP-coefficients}
We saw that an asymptotic formula for MP-coefficients is given by the location of poles of the expanded function. But it is needed to calculate MP-coefficients so explicitly that we can efficiently investigate zeros of approximating polynomial. In this section, we will see a multiplicative structure of MP-coefficients and calculate those of specific completed Dirichlet L-function. After that, we will remark a relation between MP-coefficients and the so-called deep Riemann hypothesis. 
\subsection{Multiplicative structure of MP-expansions}
%
We study the integral formula for triple product of $q_n^{(\nu, 0)}(it)$ to get the linearization of the product of two Meixer-Pollaczek polynomials. It has been obtained in \cite{A2003}. We just check the proof in our notation.  
Recall the generating function of the Meixner-Pollaczek polynomials $q_n^{(\nu, 0)}(it)$
\begin{equation}\label{gen-ft of MP}
\sum_{n=0}^{\infty} q_n^{(\nu, 0)}(iy) w^n
=
(1-w^2)^{-\frac{\nu}{2}} \left(\frac{1-w}{1+w}\right)^{iy} ,
\quad
|w|<1.
\end{equation}
\begin{prop}
When $l+m+n\equiv 0 \; (\mathrm{mod}\; 2)$ and $|n-l|\leq m \leq n+l$, 
\begin{equation}\label{integral of triple product for MP}
Q_{lmn}:=
\int_{ \mathbb{R}} 
q_l^{(\nu, 0)}(iy) q_m^{(\nu, 0)}(iy) \overline{q_n^{(\nu, 0)}(iy)} 
M_{\nu}(dy)
=
\frac{(-1)^{\frac{l+m+n}{2}} (\nu)_{\frac{l+m+n}{2}}}{\left(\frac{l+m-n}{2}\right)! \left(\frac{-l+m+n}{2}\right)! \left(\frac{l-m+n}{2}\right)!}. 
\end{equation}
Otherwise above integral is equal to $0$. 
\end{prop}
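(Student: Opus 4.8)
The plan is to compute $Q_{lmn}$ by transporting the integral through the modified Mellin transform $\mathcal{M}_\nu$ to an integral on $\mathbb{R}_{>0}$ involving Laguerre functions, where the generating-function machinery is cleanest. Concretely, since $\mathcal{M}_\nu$ is an isometry from $L^2(\mathbb{R}_{>0}, \tfrac{2^\nu}{\Gamma(\nu)}u^{\nu-1}du)$ onto $L^2(\mathbb{R}, M_\nu(dt))$ carrying $q_n^{(\nu,0)}(it)$ to the Laguerre function $\psi_n^{(\nu)}(u)=e^{-u}L_n^{(\nu-1)}(2u)$, the Plancherel formula lets us rewrite $Q_{lmn}$ as a constant times $\int_0^\infty q_l^{(\nu,0)}(iy)q_m^{(\nu,0)}(iy)\overline{q_n^{(\nu,0)}(iy)}\,M_\nu(dy)$; more usefully, pairing $q_l q_m$ against $\overline{q_n}$ and moving $q_l q_m$ (or rather its Mellin preimage) to the Laguerre side reduces everything to an explicit triple Laguerre integral $\int_0^\infty e^{-3u}L_l^{(\nu-1)}(2u)L_m^{(\nu-1)}(2u)L_n^{(\nu-1)}(2u)\,u^{\nu-1}\,du$ up to normalization. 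Alternatively, and perhaps more transparently, one keeps everything on the $\mathbb{R}$ side and uses the generating function \eqref{gen-ft of MP} directly.

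The key steps, in order: (1) Introduce two generating-function parameters $v, w$ with $|v|,|w|<1$ and form $\sum_{l,m\geq 0} Q_{lmn}\,v^l w^m$; by \eqref{gen-ft of MP} this equals $\int_{\mathbb{R}} (1-v^2)^{-\nu/2}(1-w^2)^{-\nu/2}\big(\tfrac{1-v}{1+v}\big)^{iy}\big(\tfrac{1-w}{1+w}\big)^{iy}\overline{q_n^{(\nu,0)}(iy)}\,M_\nu(dy)$. (2) Combine the two $iy$-powers into $\big(\tfrac{(1-v)(1-w)}{(1+v)(1+w)}\big)^{iy}$ and recognize that against the measure $M_\nu$, integration of $\big(\tfrac{1-W}{1+W}\big)^{iy}\overline{q_n^{(\nu,0)}(iy)}$ reproduces, via orthogonality, the coefficient of a single Meixner-Pollaczek generating function — i.e. $\int_{\mathbb{R}} \big(\tfrac{1-W}{1+W}\big)^{iy} \overline{q_n^{(\nu,0)}(iy)}\,M_\nu(dy) = \tfrac{(\nu)_n}{n!}(1-W^2)^{\nu/2}\cdot[\text{coeff extraction}]$, so the whole integral collapses to $(1-v^2)^{-\nu/2}(1-w^2)^{-\nu/2}(1-W^2)^{\nu/2}\,W^n$ times a constant, where $W = \tfrac{v+w}{1+vw}$ is the value solving $\tfrac{1-W}{1+W}=\tfrac{(1-v)(1-w)}{(1+v)(1+w)}$. (3) Simplify: $1-W^2 = \tfrac{(1-v^2)(1-w^2)}{(1+vw)^2}$, so the prefactor becomes $(1+vw)^{-\nu}$, giving $\sum_{l,m} Q_{lmn} v^l w^m = C_\nu\,(1+vw)^{-\nu}\Big(\tfrac{v+w}{1+vw}\Big)^n$ for an explicit constant $C_\nu$ (determined by setting $v=w=0$ or matching the $\nu$-normalization $\int |q_n|^2 M_\nu = (\nu)_n/n!$). (4) Extract the coefficient of $v^l w^m$ from $(v+w)^n (1+vw)^{-\nu-n}$ using the binomial theorem for $(v+w)^n$ and the generalized binomial series $(1+vw)^{-\nu-n}=\sum_j \binom{-\nu-n}{j}(vw)^j$; matching powers forces $l = a+j$, $m = (n-a)+j$ for some $0\le a\le n$, $j\ge 0$, which is exactly the parity condition $l+m+n\equiv 0\ (2)$ together with the triangle inequality $|n-l|\le m\le n+l$, and pins down $a$ and $j$ uniquely. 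Collecting the single surviving term and rewriting $\binom{-\nu-n}{j}$ and the constant $C_\nu$ in terms of Pochhammer symbols yields the closed form \eqref{integral of triple product for MP}.

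The main obstacle is step (2): justifying that $\int_{\mathbb{R}} \big(\tfrac{1-W}{1+W}\big)^{iy}\overline{q_n^{(\nu,0)}(iy)}\,M_\nu(dy)$ equals the $n$-th Meixner-Pollaczek coefficient of the generating function evaluated at $W$, i.e. that one may legitimately interchange the infinite sum defining the generating function with the integral against $M_\nu(dy)$, and that the resulting identity in $W$ (initially valid for $|W|<1$ real) extends to the complex value $W=\tfrac{v+w}{1+vw}$ — here one needs $\big|\tfrac{v+w}{1+vw}\big|<1$, which does hold for $|v|,|w|<1$, but the branch of $(\,\cdot\,)^{iy}$ and the convergence of the $y$-integral (the integrand decays like $|\Gamma(iy+\tfrac\nu2)|^2$, polynomially times exponentially, against an oscillatory factor) must be handled with care, e.g. by first restricting to real $v,w\in(0,1)$ where all manipulations are manifestly valid and then invoking analyticity in $(v,w)$ on the polydisc to conclude for all $|v|,|w|<1$. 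The combinatorial bookkeeping in step (4) — checking that the constraints on $(l,m,n)$ match and that the exceptional "otherwise $=0$" case corresponds precisely to no admissible $(a,j)$ — is routine once the generating identity is in hand. Throughout, the normalization constant $C_\nu$ should be tracked against Aptekarev's statement in \cite{A2003} and the known norm $(\nu)_n/n!$ to make sure the $(-1)^{(l+m+n)/2}$ sign and the $(\nu)_{(l+m+n)/2}$ factor come out correctly; the sign in particular will emerge from $\binom{-\nu-n}{j}=(-1)^j\binom{\nu+n+j-1}{j}$ combined with the normalization, so it is worth double-checking that the power of $-1$ reduces to $(l+m+n)/2$ as claimed.
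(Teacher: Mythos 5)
Your main argument (steps (1)--(4)) is essentially the paper's own proof, just organized with two generating variables instead of three: the paper forms $\sum_{l,m,n}Q_{lmn}r^ls^mt^n$, absorbs all three factors into a single power $a^{iy-\nu/2}$, and uses orthogonality against $q_0^{(\nu,0)}\equiv 1$ to collapse the integral to a closed form; you fix $n$, combine only the two factors $q_l^{(\nu,0)}q_m^{(\nu,0)}$ into one generating function at $W=\frac{v+w}{1+vw}$, and use the full orthogonality relation $\int q_k^{(\nu,0)}\overline{q_n^{(\nu,0)}}\,M_\nu=\delta_{kn}(\nu)_n/n!$ to land on $\frac{(\nu)_n}{n!}(v+w)^n(1+vw)^{-\nu-n}$. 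Both are legitimate; your version needs the norm $(\nu)_n/n!$ explicitly, the paper's only the $k=0$ case. Your analytic caveats (justify the interchange for real $v,w\in(0,1)$, then continue analytically on the polydisc, using $|W|<1$) are exactly the right points and are unproblematic since $M_\nu$ decays exponentially. One caution on your opening paragraph: the Laguerre-side reduction does not work as sketched --- $\mathcal M_\nu$ is unitary, so Plancherel transports pairings of \emph{two} functions, but the $\mathcal M_\nu$-preimage of a pointwise product is a multiplicative convolution, not the pointwise product of Laguerre functions; you rightly abandon this for the direct generating-function route.

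The one substantive issue is the sign, which you yourself flagged as needing verification. Completing your step (4), the coefficient of $v^lw^m$ in $\frac{(\nu)_n}{n!}(v+w)^n(1+vw)^{-\nu-n}$ is $(-1)^{j}\,(\nu)_{n+j}\big/\bigl(j!\,a!\,(n-a)!\bigr)$ with $j=\frac{l+m-n}{2}$ and $a=\frac{l-m+n}{2}$, so your method yields the sign $(-1)^{(l+m-n)/2}$, not $(-1)^{(l+m+n)/2}$. This is not a flaw in your argument: the two differ by the factor $(-1)^n$ coming from the conjugate in the definition of $Q_{lmn}$ (recall $\overline{q_n^{(\nu,0)}(iy)}=(-1)^nq_n^{(\nu,0)}(iy)$). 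A direct check at $(l,m,n)=(0,1,1)$ gives $Q_{011}=\int|q_1^{(\nu,0)}(iy)|^2M_\nu(dy)=(\nu)_1/1!=\nu>0$, whereas the displayed right-hand side gives $-\nu$; the printed formula is in fact the one for $\int q_l^{(\nu,0)}q_m^{(\nu,0)}q_n^{(\nu,0)}\,M_\nu$ \emph{without} the conjugate (for which the symmetric three-variable computation does produce $(1+rs+st+tr)^{-\nu}$), while your computation evaluates the conjugated integral correctly. So carry the bookkeeping through, report $(-1)^{(l+m-n)/2}$, and note that this agrees with the stated sign exactly when $n$ is even.
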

\begin{proof}
Put
\[
a=\left(\frac{1-r}{1+r}\right)\left(\frac{1-s}{1+s}\right)\left(\frac{1+t}{1-t}\right), 
\quad
b=(1+r)(1+s)(1-t). 
\]
For $s, r, t \in (-1, 1)$ in a neighbourhood of $0$, by the generating function \eqref{gen-ft of MP}, we have
\begin{align*}
\sum_{l, m, n=0}^{\infty}  Q_{lmn} r^l s^m t^n
&=
b^{-\nu}
\int_{\mathbb{R}}a^{iy-\frac{\nu}{2}} M_{\nu}(dy)
\\
&=
b^{-\nu}
\int_{\mathbb{R}}
\left(\frac{2}{1+a}\right)^{\nu}
\sum_{k=0}^{\infty} \left(\frac{1-a}{1+a}\right)^n q_k^{(\nu, 0)}(iy)
q_0^{(\nu, 0)}(iy)  M_{\nu}(dy).
\intertext{By the orthogonality of $q_k^{(\nu, 0)}$ and the definition of $a$ and $b$, we get}
&=
b^{-\nu} \left(\frac{2}{1+a}\right)^{\nu}
=
\left(\frac{1}{1+rs+st+tr}\right)^{\nu}.
\end{align*}
This function can be expanded in the Taylor series at $(r, s, t)=\mathbf{0}$ as
\[
\sum_{k=0}^{\infty} \binom{-\nu}{k} \sum_{\substack{a, b, c \geq 0 \\ a+b+c=k}} \frac{k!}{a! b! c!} (rs)^a (tr)^b (st)^c
=
\sum_{a, b, c=0}^{\infty} \binom{-\nu}{a+b+c} \frac{(a+b+c)!}{a! b! c!} r^{a+b} s^{a+c} t^{b+c}. 
\]
Comparing the coefficients $r^l s^m t^n$ between the first and the last formula, we get the result. 
\end{proof}

\begin{cor}\label{MP-ex of a product}
Let $Q_{lmn}\in \mathbb{R}$ be the coefficient defined in \eqref{integral of triple product for MP}. Take two functions $f, g \in L^2(\mathbb{R}, M_{\nu}(dt))$ with the MP-coefficients being respectively $a_n$ and $b_n$. If the product $(fg)(s)=f(s)g(s)$ is also contained in $L^2(\mathbb{R}, M_{\nu}(dt))$, then it is expanded as
\begin{equation*}
\left(\sum_{l=0}^{\infty}a_l q_l^{(\nu, 0)}(iy)\right)\left(\sum_{m=0}^{\infty}b_m q_m^{(\nu, 0)}(iy)\right)
=
\sum_{n=0}^{\infty} c_n q_n^{(\nu, 0)}(iy)
\end{equation*}
with the coefficients $c_n$ given by the following convolution
\begin{equation}\label{convolution of MP-coefficients}
c_n
=
\frac{n!}{(\nu)_n}\sum_{\substack{|l-m|\leq n \leq l+m\\ l+m+n \text{: even}}}a_l b_m Q_{lmn}
=
(-1)^n\sum_{j=0}^{\infty} \binom{-\nu-n}{j}
\sum_{k=0}^{n} \binom{n}{k}  a_{j+k} b_{j+n-k}.
\end{equation}
\end{cor}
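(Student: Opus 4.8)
The plan is to read off the first equality in \eqref{convolution of MP-coefficients} from the linearization of a product of two Meixner--Pollaczek polynomials (which is what the Proposition, i.e.\ \eqref{integral of triple product for MP}, encodes) combined with an approximation by partial sums, and then to obtain the second equality by a routine re-indexing. \emph{Step 1 (linearization).} Since $\{q_n^{(\nu,0)}(iy)\}_{n\ge 0}$ is an orthogonal basis of $L^2(\mathbb{R},M_\nu(dt))$ with $\int_{\mathbb{R}}|q_n^{(\nu,0)}(iy)|^2M_\nu(dy)=(\nu)_n/n!$, expanding the polynomial $q_l^{(\nu,0)}q_m^{(\nu,0)}$ in this basis and evaluating the coefficients by the Proposition gives, for all $l,m\ge 0$,
\[
q_l^{(\nu,0)}(iy)\,q_m^{(\nu,0)}(iy)=\sum_{\substack{|l-m|\le n\le l+m\\ l+m+n\ \mathrm{even}}}\frac{n!}{(\nu)_n}\,Q_{lmn}\,q_n^{(\nu,0)}(iy),
\]
a finite sum; in particular the corollary holds when $f$ and $g$ are polynomials.

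\emph{Step 2 (passing to the limit).} Write $f_N:=\sum_{l=0}^{N}a_lq_l^{(\nu,0)}(iy)$ and $g_N:=\sum_{m=0}^{N}b_mq_m^{(\nu,0)}(iy)$, which are polynomials with $f_N\to f$ and $g_N\to g$ in $L^2(\mathbb{R},M_\nu(dt))$. By bilinearity and Step 1,
\[
\int_{\mathbb{R}}f_N(iy)\,g_N(iy)\,\overline{q_n^{(\nu,0)}(iy)}\,M_\nu(dy)=\sum_{l,m=0}^{N}a_lb_mQ_{lmn},
\]
while, $fg$ being in $L^2(\mathbb{R},M_\nu(dt))$ by hypothesis, its $n$-th MP-coefficient is $c_n=\frac{n!}{(\nu)_n}\int_{\mathbb{R}}f(iy)g(iy)\overline{q_n^{(\nu,0)}(iy)}M_\nu(dy)$. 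Multiplying the previous display by $n!/(\nu)_n$ and letting $N\to\infty$ yields the first equality in \eqref{convolution of MP-coefficients}, once one checks that the left-hand side converges to $\frac{n!}{(\nu)_n}\int fg\,\overline{q_n^{(\nu,0)}}M_\nu$ and that the right-hand sum has a limit. One has $f_Ng_N\to fg$ in $L^1(\mathbb{R},M_\nu(dt))$ (the bilinear map $L^2\times L^2\to L^1$ being continuous), but since $q_n^{(\nu,0)}$ is an unbounded polynomial this is not by itself enough; here the hypothesis $fg\in L^2$ is used essentially. I would argue that $\{f_Ng_N\}_N$ is bounded in $L^2(\mathbb{R},M_\nu(dt))$, so that $f_Ng_N\rightharpoonup fg$ weakly in $L^2$, whence pairing with $q_n^{(\nu,0)}\in L^2(\mathbb{R},M_\nu(dt))$ commutes with the limit; alternatively one may pair the hybrid partial sums $f_N\,g$ with $q_n^{(\nu,0)}$, using that $\int q_l^{(\nu,0)}\,g\,\overline{q_n^{(\nu,0)}}M_\nu=\sum_m b_mQ_{lmn}$ holds rigorously for each $l$ (the test function $q_l^{(\nu,0)}q_n^{(\nu,0)}$ is a polynomial, hence lies in $L^2(M_\nu)$), and then controlling the tail over $l>N$. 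Either way we obtain $c_n=\frac{n!}{(\nu)_n}\sum a_lb_mQ_{lmn}$, the double series summed in the natural partial-sum order (for fixed $n$, the support condition $|l-m|\le n\le l+m$, $l+m+n$ even leaves only $O(n)$ values of $l$ for each $m$).

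\emph{Step 3 (re-indexing).} For $(l,m,n)$ in the support of $Q_{lmn}$ put $j:=\tfrac{l+m-n}{2}\ge 0$ and $k:=\tfrac{l-m+n}{2}\in\{0,1,\dots,n\}$, so that $l=j+k$, $m=j+n-k$, and $\tfrac{l+m+n}{2}=j+n$, $\tfrac{l+m-n}{2}=j$, $\tfrac{-l+m+n}{2}=n-k$, $\tfrac{l-m+n}{2}=k$. Then \eqref{integral of triple product for MP} becomes $Q_{lmn}=\dfrac{(-1)^{j+n}(\nu)_{j+n}}{j!\,(n-k)!\,k!}$, and with $\dfrac{n!}{k!\,(n-k)!}=\binom{n}{k}$, $(\nu)_{j+n}=(\nu)_n(\nu+n)_j$ and $\binom{-\nu-n}{j}=\dfrac{(-1)^j(\nu+n)_j}{j!}$ one gets
\[
\frac{n!}{(\nu)_n}\sum_{\substack{|l-m|\le n\le l+m\\ l+m+n\ \mathrm{even}}}a_lb_mQ_{lmn}=(-1)^n\sum_{j=0}^{\infty}\binom{-\nu-n}{j}\sum_{k=0}^{n}\binom{n}{k}a_{j+k}b_{j+n-k},
\]
which is the second equality in \eqref{convolution of MP-coefficients}.

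The main obstacle is the $L^2$-boundedness (equivalently, weak-$L^2$ convergence) step inside Step 2: this is exactly where the assumption $fg\in L^2(\mathbb{R},M_\nu(dt))$ is indispensable, since $f,g\in L^2$ by themselves need not even make the series in \eqref{convolution of MP-coefficients} absolutely convergent. Steps 1 and 3 are routine --- the linearization is immediate from \eqref{integral of triple product for MP} and orthogonality, and Step 3 is pure Pochhammer bookkeeping.
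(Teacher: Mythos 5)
Your proposal follows essentially the same route as the paper: the paper's entire written proof of this corollary is the single sentence ``change the indexes by $l=j+k$ and $m=j+n-k$,'' which is exactly your Step 3, and your Pochhammer bookkeeping ($\frac{l+m+n}{2}=j+n$, $(\nu)_{j+n}=(\nu)_n(\nu+n)_j$, $\binom{-\nu-n}{j}=\frac{(-1)^j(\nu+n)_j}{j!}$) checks out. Your Steps 1--2 supply the justification of the first equality in \eqref{convolution of MP-coefficients}, which the paper treats as immediate from orthogonality and the triple-product formula; here you correctly isolate the one genuine analytic subtlety (that $q_n^{(\nu,0)}$ is unbounded, so $L^1$-convergence of $f_Ng_N$ is insufficient). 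Be aware, though, that neither of your two proposed fixes is complete as stated: the $L^2(M_\nu)$-boundedness of $\{f_Ng_N\}_N$ does not follow from $f,g,fg\in L^2(M_\nu)$ alone, and the hybrid route still needs $\int (f-f_N)\,g\,\overline{q_n^{(\nu,0)}}\,M_\nu\to 0$, which would be immediate if $g\,\overline{q_n^{(\nu,0)}}\in L^2(M_\nu)$ but is not guaranteed by the hypotheses. This is a gap in the paper as well (it offers no argument at all for the limit interchange), so your proposal is at least as rigorous as the original; just do not present the weak-convergence claim as established.
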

\begin{proof}
For the last equation, change the indexes by $l=j+k$ and $m=j+n-k$. 
\end{proof}


\subsection{Generating function of MP-coefficients}
Here we see the integral formula for the generating function of MP-coefficients. We use the unitary isomorphism between the Hilbert spaces $\mathcal{M}_{\nu}: L^2(\mathbb{R}_{>0}, \frac{2^{\nu}}{\Gamma(\nu)} u^{\nu-1}du) \rightarrow L^2(\mathbb{R}, M_{\nu}(dt))$ by the modified Mellin transform $\mathcal{M}_{\nu}$. We use the formulae below (ref. \cite{AAR1999}, \cite{KLS}).
\begin{enumerate}

\item {Bessel function}
\begin{equation}\label{Bessel ft}
J_{\nu-1}(x)
=
\frac{(x/2)^{\nu-1}}{\Gamma(\nu)}
{}_0 F_1
\left(
\begin{array}{c}
 -  \\
 \nu
\end{array}
; \; -\left(\frac{x}{2}\right)^2  \right),
\quad
J_{1/2}(x)=\sqrt{\frac{2}{\pi x}}\sin x ,
\quad
J_{-1/2}(x)=\sqrt{\frac{2}{\pi x}}\cos x. 
\end{equation}

\item {Generating function of Laguerre polynomials} 
\begin{align}
&\sum_{n=0}^{\infty}\label{2nd gen-ft of Laguerre}
\frac{L_n^{(\nu-1)}(x)}{(\nu-1)_n}t^n
=
e^t
{}_0 F_1
\left(
\begin{array}{c}
 -  \\
 \nu
\end{array}
; \; -xt  \right)
=
e^t\frac{\Gamma(\nu)}{\sqrt{xt}^{\nu-1}}J_{\nu-1}\left(2\sqrt{xt}\right).
\end{align}
\end{enumerate}
\begin{lem}\label{lem gen-ft of MP-coeff}
For a function $f(it)\in L^2\left(\mathbb{R}, M_{\nu}(dt)\right)$, let $\{a_n\}$ be its MP-coefficients. And let $\varphi(u)\in L^2(\mathbb{R}_{>0}, u^{\nu-1}du)$ be the image of $f(it)$ under the inverse of the modified Mellin transform $\mathcal{M}_{\nu}^{-1}$: 
\begin{equation*}
\varphi(u)=
\sum_{n=0}^{\infty}a_n e^{-u}L_n^{(\nu-1)}(2u)
\xmapsto{\mathcal{M}_{\nu}}
f(it)=\sum_{n=0}^{\infty}a_n q_n^{(\nu, 0)}(it). 
\end{equation*}
Then the generating function of the MP-coefficients $a_n$ is given by
\begin{align}\label{gen-ft of MP-coeff}
&\sum_{n=0}^{\infty} \frac{(\nu)_n}{n!} \frac{a_n}{(\nu -1)_n}t^n
=
2^{\nu} e^t
\int_0^{\infty} \varphi(u) e^{-u} \frac{ J_{\nu-1}\left(2\sqrt{2ut}\right)}{\sqrt{2ut}^{\nu-1}}  u^{\nu-1}du
\end{align}
\end{lem}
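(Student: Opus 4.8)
\emph{Proof plan.}
The plan is to transport everything to the Laguerre model via the unitary map $\mathcal{M}_\nu$, recover each $a_n$ as an integral against $\psi_n^{(\nu)}(u)=e^{-u}L_n^{(\nu-1)}(2u)$, and then insert the Laguerre generating function \eqref{2nd gen-ft of Laguerre}. First I would use the orthogonality relation $\int_{\mathbb{R}}|q_n^{(\nu,0)}(it)|^2M_\nu(dt)=\frac{(\nu)_n}{n!}$ to write $a_n=\frac{n!}{(\nu)_n}\int_{\mathbb{R}}f(it)\overline{q_n^{(\nu,0)}(it)}\,M_\nu(dt)$, i.e. $\frac{(\nu)_n}{n!}a_n=\langle f,q_n^{(\nu,0)}\rangle_{M_\nu}$. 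Since $\mathcal{M}_\nu$ is an isometry onto $L^2(\mathbb{R},M_\nu(dt))$ sending $\psi_n^{(\nu)}$ to $q_n^{(\nu,0)}$, and since $\psi_n^{(\nu)}$ is real-valued, this becomes
\[
\frac{(\nu)_n}{n!}\,a_n=\int_0^\infty \varphi(u)\,\psi_n^{(\nu)}(u)\,\frac{2^\nu}{\Gamma(\nu)}u^{\nu-1}\,du
=\frac{2^\nu}{\Gamma(\nu)}\int_0^\infty \varphi(u)\,e^{-u}L_n^{(\nu-1)}(2u)\,u^{\nu-1}\,du .
\]

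Next I would multiply this identity by $\frac{t^n}{(\nu-1)_n}$, sum over $n\ge 0$, and move the sum inside the integral. The inner sum is handled by \eqref{2nd gen-ft of Laguerre} with $x=2u$, namely $\sum_{n=0}^\infty \frac{L_n^{(\nu-1)}(2u)}{(\nu-1)_n}t^n=e^t\,{}_0F_1(-;\nu;-2ut)=e^t\frac{\Gamma(\nu)}{\sqrt{2ut}^{\,\nu-1}}J_{\nu-1}\!\left(2\sqrt{2ut}\right)$. Substituting, the two factors $\Gamma(\nu)$ cancel and $e^t$ pulls out of the integral, leaving precisely
\[
\sum_{n=0}^\infty \frac{(\nu)_n}{n!}\frac{a_n}{(\nu-1)_n}t^n
=2^\nu e^t\int_0^\infty \varphi(u)\,e^{-u}\,\frac{J_{\nu-1}\!\left(2\sqrt{2ut}\right)}{\sqrt{2ut}^{\,\nu-1}}\,u^{\nu-1}\,du ,
\]
which is \eqref{gen-ft of MP-coeff}.

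The step that needs justification — and the main obstacle — is the interchange of $\sum_n$ with $\int_0^\infty$, since $\varphi$ is only assumed to lie in $L^2(\mathbb{R}_{>0},u^{\nu-1}du)$, not in $L^1$. To deal with this I would consider the partial sums $G_t^{(N)}(u):=\sum_{n=0}^{N}\frac{t^n}{(\nu-1)_n}\psi_n^{(\nu)}(u)$ and show they converge in $L^2(\mathbb{R}_{>0},\tfrac{2^\nu}{\Gamma(\nu)}u^{\nu-1}du)$: by orthogonality and $\|\psi_n^{(\nu)}\|^2=\frac{(\nu)_n}{n!}$ one has $\|G_t^{(N)}-G_t^{(M)}\|^2=\sum_{M<n\le N}\frac{|t|^{2n}}{(\nu-1)_n^2}\frac{(\nu)_n}{n!}$, the tail of a series that converges for every $t\in\mathbb{C}$ because $1/(\nu-1)_n^2$ decays faster than any exponential. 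The same series \eqref{2nd gen-ft of Laguerre} shows $G_t^{(N)}(u)$ converges pointwise (indeed locally uniformly in $u$) to $e^{-u}e^t\frac{\Gamma(\nu)}{\sqrt{2ut}^{\,\nu-1}}J_{\nu-1}(2\sqrt{2ut})$, so the $L^2$-limit agrees with this closed form. Continuity of the $L^2$-pairing against $\varphi$ then gives $\int_0^\infty \varphi\,G_t^{(N)}\frac{2^\nu}{\Gamma(\nu)}u^{\nu-1}\,du\to \int_0^\infty \varphi\cdot(\text{closed form})\cdot\frac{2^\nu}{\Gamma(\nu)}u^{\nu-1}\,du$, while the left side equals $\sum_{n=0}^N \frac{t^n}{(\nu-1)_n}\frac{(\nu)_n}{n!}a_n$; letting $N\to\infty$ yields the formula for all $t$. (Alternatively one proves it for real $t$ first and extends by analyticity in $t$, both sides being entire.) A minor point to keep straight is that the coefficient series itself converges absolutely: writing $b_n:=\frac{(\nu)_n}{n!}a_n=\langle\varphi,\psi_n^{(\nu)}\rangle$, Bessel's inequality gives $\sum_n\frac{n!}{(\nu)_n}|b_n|^2=\|\varphi\|^2<\infty$, so by Cauchy–Schwarz $\sum_n\frac{|t|^n}{(\nu-1)_n}|b_n|\le\big(\sum_n\frac{|t|^{2n}}{(\nu-1)_n^2}\frac{(\nu)_n}{n!}\big)^{1/2}\|\varphi\|<\infty$.
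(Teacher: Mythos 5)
Your proposal is correct and follows essentially the same route as the paper: interpret the integral as an $L^2(\mathbb{R}_{>0},u^{\nu-1}du)$ inner product of $\varphi$ against the Laguerre-generating-function kernel, expand that kernel via \eqref{2nd gen-ft of Laguerre}, and use orthogonality of the Laguerre functions to extract the coefficients $\frac{(\nu)_n}{n!}a_n$. The paper compresses this into two sentences; your version merely supplies the $L^2$-convergence details justifying the term-by-term pairing.
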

\begin{proof}
We see that above integral is defined as the inner product of $L^2(\mathbb{R}_{>0}, u^{\nu-1}du)$. By the generating function \eqref{2nd gen-ft of Laguerre} and the orthogonality of the Laguerre polynomials, we get \eqref{gen-ft of MP-coeff}.
\end{proof}

\begin{ex}[Completed Riemann zeta function]
If $\varphi(u)=\sqrt{\frac{\pi}{u}}\left(\frac{1}{e^{2\sqrt{\pi u}}-1}-\frac{1}{2\sqrt{\pi u}}\right)$, then $f(it)=\mathcal{M}_{3/2}(\varphi)(it)$ is the completed Riemann zeta function $\Xi(t)$. 
The generating function of MP-coefficients $\{a_n\}$ in $\Xi(t)=\sum a_n q_n^{(\frac32, 0)}(it)$ is 
\begin{align*}
\sum_{n=0}^{\infty} \frac{(3/2)_n}{n!} \frac{a_n}{(1/2)_n}t^n
=
2^{\frac32} e^t 
\int_0^{\infty} 
\sqrt{\pi}\left(
\frac{1}{e^{2\sqrt{\pi u}}-1}-\frac{1}{2\sqrt{\pi u}}
\right)
 e^{-u} \frac{\sin(2\sqrt{2ut})}{\sqrt{2ut}} du.
\end{align*}
\end{ex}

\subsection{Completed Dirichlet $L$-functions}
Now we consider the Dirichlet $L$-function $L(s,\chi)=\sum_{n\geq 1}\chi (n)n^{-s}$ associated with a primitive character $\chi: \mathbb{Z}/N\mathbb{Z}\rightarrow \mathbb{C}^{\times}$. 
Define $\varepsilon(\chi)\in\{0, 1\}$ by $\chi(-1)=(-1)^{\varepsilon(\chi)}$. The completed $L$-function is defined by
\[
\widehat{L}(s,\chi)
:=
N^{\frac{s}{2}}\pi^{-\frac{s+\varepsilon(\chi)}{2}}
\Gamma\left(\frac{s+\varepsilon(\chi)}{2}\right)
L(s, \chi). 
\]
It satisfies the functional equation. In particular, we have $\widehat{L}(s, \chi)=\widehat{L}(1-s, \chi)$ if the character $\chi$ is real. 
%
%
\\
Recall the modified Mellin transform $\mathcal{M}_{\nu}(\varphi)(s)=\frac{1}{\Gamma(s+\nu/2)}\mathcal{M}(\varphi)\left(s+\frac{\nu}{2}\right)$, where $\mathcal{M}$ is the Mellin transform.
By the definition we have
\begin{equation}\label{mMellin integral rep. of L-function}
\widehat{L}(s, \chi)
=
\mathcal{M}_{\frac12+(1-\varepsilon)}
\left(\left(\frac{\pi}{u}\right)^{\frac{1-\varepsilon}{2}}
\sum_{n=1}^{\infty}\chi(n)e^{-2n\sqrt{\frac{\pi}{N}u}}\right)(it),
\quad
s=\frac12+2it,
\end{equation}
by changing the order of the integral and the summation, and by the duplication formula 
$\Gamma\left(\frac{s+\varepsilon}{2}\right)\Gamma\left(\frac{s+1-\varepsilon}{2}\right)=\sqrt{\pi}2^{1-s}\Gamma(s)$. By the periodicity of $\chi$, the integrand is simplified as
\begin{equation}\label{integrand for L-function}
\varphi(u)=
\left(\frac{\pi}{u}\right)^{\frac{1-\varepsilon}{2}}
\frac{1}{1-q^N}
\sum_{m=1}^{N}\chi(m) q^m,
\quad
q=e^{-2\sqrt{\frac{\pi}{N}u}}. 
\end{equation}
This rational formula of $q$ can be reduced and the cyclotomic polynomial $\Phi_N(q)$  appears in the denominator. 

\subsection{Computation of MP-coefficients for $\widehat{L}(s, \chi_{-1})$}
Here we compute the MP-coefficients of $\widehat{L}(s, \chi_{-1})$ associated with the character $\chi_{-1}:  \mathbb{Z}/4\mathbb{Z}\rightarrow \mathbb{C}^{\times}$ defined by $ \chi_{-1}(\bar{1})=1, \; \chi_{-1}(\bar{3})=-1$. 
By \eqref{mMellin integral rep. of L-function} and \eqref{integrand for L-function}, $\widehat{L}(\frac12 +2it, \chi_{-1})$ is the image of $\varphi(u)=\frac12 \mathrm{sech}(\sqrt{\pi u})$ by $\mathcal{M}_{1/2}$. 
Now considering an alternative function $\psi(u)=\frac12 \mathrm{sech}(\sqrt{\pi i u})$, we get the following relation: 
\\
\begin{center}
\begin{tabular}{ccc} 
$\displaystyle \varphi(u)=\frac12 \mathrm{sech}(\sqrt{\pi u})$ & $\xmapsto{\mathcal{M}_{1/2}}$ & $\displaystyle \widehat{L}(\frac12+2it, \chi_{-1})=\sum_{n=0}^{\infty}a_n q_n^{(\frac12, 0)}(it)$
\\ [0.5 em]
$\downarrow (u\rightarrow iu)$ &  & $\downarrow (\times i^{it-\frac{1}{4}})$
\\ [0.5 em]
$\displaystyle \psi(u)=\frac{1}{2} \mathrm{sech}(\sqrt{\pi i u})$ & $\xmapsto{\mathcal{M}_{1/2}}$ & $\displaystyle i^{it+\frac{1}{4}}\widehat{L}(\frac12+2it, \chi_{-1}) =\sum_{n=0}^{\infty}a'_n q_n^{(\frac12, 0)}(it)$
\end{tabular}
\end{center}
where $a_n$ and $a'_n$ are defined respectively to be the MP-coefficients of $\widehat{L}(\frac12+2it, \chi_{-1})$ and $i^{it+\frac{1}{4}}\widehat{L}(\frac12+2it, \chi_{-1})$. We here fix the $\log$ branch by $\log z=\log|z|+i\arg z\; (-\pi <\arg z < \pi)$. 
First we get a formula for $a'_n$, which is essentially the same one calculated in \cite{Ku2007}. 
And then, we get a formula for $a_n$ using the multiplicative structure of MP-coefficients. 
\begin{lem}
For $n\geq 0$, 
\begin{equation} \label{MP-coeff of exp chi-1 L-ft}
a'_n
=
\frac{1}{1-2n}F_n,
\end{equation}
where $F_n \; (n\geq 0)$ are recursively defined by
\begin{equation}\label{constants Fn}
\sum_{k=0}^n \frac{n! (2\pi i)^k}{(n-k)!(2k)!} F_{n-k}=\frac{(-1)^n\sqrt{i}-i}{\sqrt{2}}
\quad
(n\geq 0).  
\end{equation}
\end{lem}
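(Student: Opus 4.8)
The plan is to pass to the generating function of $\{a'_n\}$ furnished by Lemma~\ref{lem gen-ft of MP-coeff} and then to read off the recursion \eqref{constants Fn} as a single closed-form evaluation of a Gaussian-type integral. First I would specialise Lemma~\ref{lem gen-ft of MP-coeff} to $\nu=\tfrac12$ and $\varphi=\psi$, where $\psi(u)=\tfrac12\,\mathrm{sech}(\sqrt{\pi i u})$ is the function in the left column of the diagram above, so that $\mathcal{M}_{1/2}(\psi)=i^{\,it+\frac14}\widehat{L}(\tfrac12+2it,\chi_{-1})$ and $a'_n$ are its MP-coefficients. Inserting $J_{-1/2}(x)=\sqrt{2/(\pi x)}\cos x$ from \eqref{Bessel ft} and using the elementary identity $(\tfrac12)_n/(-\tfrac12)_n=1-2n$, the left-hand side of \eqref{gen-ft of MP-coeff} becomes $\sum_{n\ge 0}\frac{(1-2n)a'_n}{n!}t^n$. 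Since $1-2n\neq 0$ for every integer $n\ge 0$, putting $F_n:=(1-2n)a'_n$ gives \eqref{MP-coeff of exp chi-1 L-ft} at once, and \eqref{gen-ft of MP-coeff} takes the form
\[
\sum_{n=0}^{\infty}\frac{F_n}{n!}\,t^n=\frac{e^{t}}{\sqrt{2\pi}}\int_{0}^{\infty}\mathrm{sech}(\sqrt{\pi i u})\,e^{-u}\cos\!\left(2\sqrt{2ut}\,\right)u^{-1/2}\,du .
\]
It then remains to verify that this sequence $\{F_n\}$ satisfies \eqref{constants Fn}.

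Second, I would recast \eqref{constants Fn} as an equivalent identity of entire functions. Dividing \eqref{constants Fn} by $n!$, multiplying by $t^n$, summing over $n\ge 0$, and using $\sum_{k\ge 0}x^{k}/(2k)!=\cosh\sqrt{x}$ together with $\sum_{n\ge 0}(\pm t)^{n}/n!=e^{\pm t}$, the recursion \eqref{constants Fn} is equivalent to
\[
\cosh\!\left(\sqrt{2\pi i t}\,\right)\sum_{n=0}^{\infty}\frac{F_n}{n!}\,t^n=\frac{\sqrt{i}\,e^{-t}-i\,e^{t}}{\sqrt{2}} .
\]
Substituting the integral expression for $\sum F_n t^n/n!$ from the first step, the whole lemma reduces to the single evaluation
\[
\int_{0}^{\infty}\mathrm{sech}(\sqrt{\pi i u})\,e^{-u}\cos\!\left(2\sqrt{2ut}\,\right)u^{-1/2}\,du=\frac{\sqrt{\pi}\left(\sqrt{i}\,e^{-2t}-i\right)}{\cosh\!\left(\sqrt{2\pi i t}\,\right)},
\]
which at $t=0$ specialises to the consistency check $\int_{0}^{\infty}\mathrm{sech}(\sqrt{\pi i u})\,e^{-u}u^{-1/2}\,du=\sqrt{\pi}\,(\sqrt{i}-i)$.

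Third and last, I would establish this integral identity, which is where the actual work is concentrated; it is essentially the calculation carried out in \cite{Ku2007}, so one may either cite it directly or reprove it along the following lines. The substitution $u=v^{2}$ (together with evenness in $v$) rewrites the left-hand side as $\int_{-\infty}^{\infty}e^{-v^{2}+2i\sqrt{2t}\,v}\,\mathrm{sech}(\sqrt{\pi i}\,v)\,dv$. Completing the square pulls out the factor $e^{-2t}$, and one then deforms the contour across the poles of $\mathrm{sech}(\sqrt{\pi i}\,v)$, which lie on the ray $v\in\sqrt{\pi i}\,(\mathbb{Z}+\tfrac12)$; the residues form a series that a Jacobi theta transformation resums precisely into the right-hand side, the denominator $\cosh(\sqrt{2\pi i t}\,)$ and the two terms $\sqrt{i}\,e^{-2t}$ and $-i$ coming out of that resummation. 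I expect the main obstacle to be the bookkeeping of this deformation: because the parameter $\sqrt{\pi i}$ is rotated off the real axis one must be careful in which direction the contour is pushed and must control the decay of the integrand in the relevant sectors, and one must match the theta-resummed residue series with the stated closed form. This is exactly the step where the argument of \cite{Ku2007} does the heavy lifting.
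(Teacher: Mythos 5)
Your proposal is correct and follows essentially the same route as the paper: specialize the generating-function lemma with $J_{-1/2}(x)=\sqrt{2/(\pi x)}\cos x$ and $(\tfrac12)_n/(-\tfrac12)_n=1-2n$, recognize the resulting integral as the integrable ($\tau=1$) Mordell integral whose closed form $\frac{\sqrt{i}e^{-t}-ie^{t}}{\sqrt{2}\cosh(\sqrt{2\pi i t})}$ is quoted from \cite{Ku2007}, and read off the recursion for $F_n$ by multiplying through by the denominator. Your explicit Cauchy-product reformulation of \eqref{constants Fn} is exactly the step the paper leaves implicit, and your $\cosh\left(\sqrt{2\pi i t}\right)$ is the reading of the denominator consistent with the factor $(2\pi i)^k/(2k)!$ in the stated recursion.
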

\begin{proof}
By \eqref{Bessel ft} and Lemma \ref{lem gen-ft of MP-coeff}, we have
\begin{align}\label{gen-ft for chi-1 L-ft 2}
\sum_{n=0}^{\infty} \frac{(1/2)_n}{n!} \frac{a'_n}{(-1/2)_n} t^n 
&=
\sqrt{\frac{2}{\pi}} e^t
\int_0^{\infty} \frac{1}{2} \mathrm{sech}(\sqrt{\pi i u}) e^{-u} \cos(2\sqrt{2tu})  u^{-\frac12}du.
\end{align}
By variable change $u\rightarrow -i\pi u^2$ and the symmetry of integrand, this integral is identified with the Mordell integral
\begin{equation}\label{Mordell integral}
h(z)=h(z; \tau)
:=
\int_{\mathbb{R}}\frac{e^{\pi i \tau x^2 -2\pi zx}}{\cosh(\pi x)}dx
\quad
(\Im \tau \geq 0), 
\end{equation}
with $z=\sqrt{\frac{2it}{\pi}}, \; \tau=1$, up to constant. It is integrable (see \cite{Ku2007}), so that we have
\begin{equation*}
\sum_{n=0}^{\infty} \frac{(1/2)_n}{n!} \frac{a'_n}{(-1/2)_n} t^n \notag
=
\frac{\sqrt{i}e^{-t}-ie^{t}}{\sqrt{2}\cos(\sqrt{2\pi it})}. 
\end{equation*} 
The Taylor coefficients of this analytic function at $t=0$ are given by $F_n$ defined in \eqref{constants Fn}, and we get the result. 
\end{proof}
The generating function \eqref{gen-ft for chi-1 L-ft 2} of $a'_n$ is an integrable Mordell integral. On the other hand, it is not integrable for $a_n$. But we can get a formula for $a_n$ by $a'_n$. 
\begin{prop}
The MP-coefficients of the completed L-function associated with $\chi_{-1}$ in 
\begin{equation*}
\widehat{L}\left(\frac{1}{2}+2it, \chi_{-1}\right)=\sum_{n=0}^{\infty} c_n q_n^{(\frac12, 0)}(it)
\end{equation*}
are written as 
\begin{align}\label{formula of MP-coeff for cL(s, chi-1)}
c_n
=
(-2i)^{\frac{1}{4}}
\sum_{j=0}^{\infty} \binom{-n-1/2}{j}
\sum_{k=0}^n
 \binom{n}{k} 
 \frac{ i^{n-j+k} F_{j+k}}{1-2(j+k)}, 
\end{align}
and $c_n=0$ for odd $n$.  
\end{prop}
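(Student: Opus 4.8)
The plan is to connect $c_n$ to $a_n'$ via the multiplicative structure of MP-coefficients from Corollary \ref{MP-ex of a product}. The key observation is that the two functions in the commutative diagram differ by the multiplier $i^{it+\frac14}$; concretely, $\widehat{L}(\tfrac12+2it,\chi_{-1}) = i^{-it-\frac14}\cdot\bigl(i^{it+\frac14}\widehat{L}(\tfrac12+2it,\chi_{-1})\bigr)$, so I would expand the scalar-valued (in $t$, but $t$-dependent) factor $i^{-it-\frac14}$ itself in Meixner-Pollaczek polynomials $q_m^{(1/2,0)}(it)$ and then convolve its MP-coefficients against $\{a_n'\}$ using \eqref{convolution of MP-coefficients}. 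First I would compute the MP-coefficients of $t\mapsto i^{-it}$ (up to the constant $i^{-1/4} = (-2i)^{1/4}/(\sqrt2\,\text{-ish factor})$, to be pinned down by branch bookkeeping): since $i^{-it} = e^{-it\log i} = e^{(\pi/2)t}$ is, after the substitution $s = \tfrac12+2it$ used throughout, an exponential, its image under $\mathcal{M}_{1/2}^{-1}$ is an explicit elementary function, and its MP-coefficients can be read off from the generating-function identity in Lemma \ref{lem gen-ft of MP-coeff} — or, more directly, from the known expansion $\sum_n q_n^{(\nu,0)}(iy)w^n = (1-w^2)^{-\nu/2}\bigl(\frac{1-w}{1+w}\bigr)^{iy}$ in \eqref{gen-ft of MP}, which after a suitable choice of $w$ gives the coefficients of $\bigl(\frac{1-w}{1+w}\bigr)^{iy}$ as powers of $i$ times binomial-type factors. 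This is where the factors $i^{n-j+k}$ and $\binom{-n-1/2}{j}\binom{n}{k}$ in \eqref{formula of MP-coeff for cL(s, chi-1)} originate.

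Next I would assemble the convolution: writing $\widehat{L} = (\text{multiplier})\cdot(i^{it+1/4}\widehat{L})$ with MP-coefficients $b_m$ (for the multiplier) and $a_m'$ (for the second factor, given by \eqref{MP-coeff of exp chi-1 L-ft}), Corollary \ref{MP-ex of a product} yields
\[
c_n = (-1)^n\sum_{j=0}^{\infty}\binom{-\nu-n}{j}\sum_{k=0}^{n}\binom{n}{k} b_{j+k}\,a_{j+n-k}'
\]
with $\nu = \tfrac12$. Substituting $a_{j+n-k}' = F_{j+n-k}/(1-2(j+n-k))$ and the explicit $b$'s, and then re-indexing $k \mapsto n-k$ to put the $F$-index in the form $F_{j+k}$, should collapse everything to \eqref{formula of MP-coeff for cL(s, chi-1)} after checking that the $b_{j+k}$ contribute exactly $i^{n-j+k}$ (up to the overall constant $(-2i)^{1/4}$). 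Along the way one must verify that the product of the two $L^2$-functions indeed lies in $L^2(\mathbb{R},M_{1/2}(dt))$ so that Corollary \ref{MP-ex of a product} applies; this follows because multiplication by the unimodular-on-$\mathbb{R}$ factor $i^{it+1/4}$ (which has $|i^{it+1/4}| = e^{-\pi t/2}\cdot(\dots)$ — careful, it is not unimodular) is compensated by decay of $\widehat{L}$, or alternatively one argues directly from the integrability established for $\varphi(u) = \tfrac12\operatorname{sech}(\sqrt{\pi u})$ on the Mellin side.

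The vanishing $c_n = 0$ for odd $n$ is the functional-equation parity: $\widehat{L}(s,\chi_{-1}) = \widehat{L}(1-s,\chi_{-1})$ translates under $s = \tfrac12+2it$ into evenness in $t$, and since $q_n^{(1/2,0)}(-it) = (-1)^n q_n^{(1/2,0)}(it)$, only even $n$ survive; this can also be read off from the recursion and the convolution formula directly.

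The main obstacle I anticipate is the branch-of-logarithm bookkeeping for the multiplier $i^{\pm(it+1/4)}$ — getting the constant $(-2i)^{1/4}$ and the sign/phase of each $i^{n-j+k}$ exactly right, since $i^{it}$ and $i^{-it}$ are genuinely different functions on $\mathbb{R}$ and the paper has fixed $\log z = \log|z| + i\arg z$ with $-\pi < \arg z < \pi$. A secondary technical point is justifying the interchange of the infinite convolution sum with the expansion (absolute convergence of $\sum_j \binom{-n-1/2}{j}(\cdots)$), which should follow from the asymptotics of $F_n$ implicit in \eqref{constants Fn} together with the $a_n \sim \cdots$ estimate from Lemma \ref{coefficients in MP-ex} applied to $\widehat{L}(s,\chi_{-1})$, whose only relevant poles of $\mathcal{M}_{1/2}^{-1}$-transform are off the imaginary axis.
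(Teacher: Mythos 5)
Your overall strategy is the paper's: factor $\widehat{L}(\tfrac12+2it,\chi_{-1})$ as the product of $i^{it+\frac14}\widehat{L}(\tfrac12+2it,\chi_{-1})$ (whose MP-coefficients $a'_n=F_n/(1-2n)$ are known) with the multiplier $i^{-it-\frac14}$, read the multiplier's MP-coefficients off the generating function \eqref{gen-ft of MP}, convolve via Corollary \ref{MP-ex of a product}, and deduce the vanishing for odd $n$ from the functional equation together with the parity of $q_n^{(\nu,0)}$. The phase bookkeeping you flag as the main obstacle does work out: with $b_m=\mathrm{const}\cdot(-i)^m$ one gets $(-1)^n(-i)^{j+n-k}=i^{n-j+k}$ and the constant $(-2i)^{1/4}$, and your reindexing $k\mapsto n-k$ is harmless.

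However, there is a genuine gap in the step you treat as routine: Corollary \ref{MP-ex of a product} cannot be applied directly to the pair $\bigl(i^{-it-\frac14},\,i^{it+\frac14}\widehat{L}\bigr)$. The corollary requires \emph{both} factors to lie in $L^2(\mathbb{R},M_{1/2}(dt))$, and the multiplier does not: $|i^{-it}|^2=e^{\pi t}$, while the density $|\Gamma(it+\tfrac14)|^2$ decays only like $|t|^{-1/2}e^{-\pi|t|}$, so the integrand is of order $|t|^{-1/2}$ as $t\to+\infty$ and the norm diverges. (You worried about the \emph{product} being in $L^2$ --- it is --- but the failure is in one of the factors.) Equivalently, the ``MP-expansion of $i^{-it-\frac14}$'' you want to extract from \eqref{gen-ft of MP} corresponds to evaluating the generating function at a point $w$ on the unit circle, where the series $\sum_n q_n^{(1/2,0)}(it)w^n$ does not converge. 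The paper's proof supplies exactly the missing device: replace the multiplier by $i^{-\frac14}\bigl(\frac{1-ir}{1+ir}\bigr)^{it}$ with $0<r<1$, which does lie in $L^2$ and has coefficients $b_n=(-i(1+r^2))^{\frac14}(-ir)^n$, apply the corollary to obtain $c_n(r)$, and then let $r\to1^{-}$, using $L^2$-convergence of both sides of the resulting identity to conclude $c_n=\lim_{r\to1^{-}}c_n(r)$. Without this regularization (or an equivalent justification) your convolution formula remains formal, and the absolute-convergence issue you raise for the $j$-sum is a symptom of the same problem rather than a separate technicality.
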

\begin{proof}
By the generating function of the Meixner-Pollaczek polynomials \eqref{gen-ft of MP}, we have
\begin{equation}\label{MP-ex of exp}
i^{-\frac14} \left(\frac{1-ir}{1+ir}\right)^{it}
=
 \sum_{n=0}^{\infty} \left(-i(1+r^2)\right)^{\frac14} (-ir)^n q_n^{(\frac12, 0)}(it) 
\end{equation}
for $0<r<1$. 
Then applying Corollary \ref{MP-ex of a product} with 
$
a_n=a'_n
=
\frac{1}{1-2n}F_n
$ in \eqref{MP-coeff of exp chi-1 L-ft} 
and 
$b_n=\left(-i(1+r^2)\right)^{\frac14} (-ir)^n$, we get the MP-coefficients $c_n(r)$ defined in
\begin{equation}\label{c_n^r MP-coeff}
\left(i\frac{1-ir}{1+ir}\right)^{it}
\widehat{L}\left(\frac12+2it, \chi_{-1}\right)
=
\sum_{n=0}^{\infty} c_n(r) q_n^{(\frac12, 0)}(it),
\end{equation}
as the convolution of $a_n$ and $b_n$. For the limitation $r\rightarrow 1$, both side of \eqref{c_n^r MP-coeff} converges in $L^2(\mathbb{R}, M_{\nu}(dt))$. Then we get \eqref{formula of MP-coeff for cL(s, chi-1)} by $c_n=\lim_{r\rightarrow 1-0}c_n(r)$. 
By the parity $q_n^{(\nu, 0)}(-s)=(-1)^n q_n^{(\nu, 0)}(s)$ and the functional equation $\widehat{L}(s, \chi_{-1})=\widehat{L}(1-s, \chi_{-1})$, it follows that $c_n=0$ for odd $n$.  
\end{proof}

\section{Approximating zeros}
  \label{Approximating zeros}
\subsection{Convergence of zeros of partial sums}

Theorem \ref{uniform convergence of MP-ex} ensures that each zero of the expanded function $f(it)$ is approximated by the zeros of the partial sums. Denote the $n$-th partial sum of MP-expansion for a function $f$ by
\[
S_n(f; t):=\sum_{k=0}^n a_k(f) q_k^{(\nu, 0)}(it). 
\]
When there is no risk of ambiguity, we just write $S_n(f; t)=S_n(t), \; a_n(f)=a_n$. We put $Z_n(f)$ to be the set of all the zeros of $S_n(f; t)$
\[
Z_n(f):=\left\{\rho \in\mathbb{C}|S_n(f; \rho)=0\right\}
\]
 for each $n\geq0$. 
\begin{prop}
Retain the same assumption and notations in Lemma \ref{coefficients in MP-ex}, Theorem \ref{uniform convergence of MP-ex}. 
Then for every zero $i\rho\in E(f)$ of $f(it)$, and for an arbitrary sufficient small $\varepsilon>0$, there exists $N\in\mathbb{N}$ such that
\[
n\geq N \Rightarrow \exists\rho_n\in Z_n(f) ; |\rho_n-\rho|<\varepsilon.
\]
In particular, if $i\rho$ is a simple zero, such a $\rho_n \in Z_n(f)$ is unique for each $n\geq N$.
\end{prop}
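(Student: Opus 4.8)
The plan is to obtain the proposition as a direct application of Theorem \ref{uniform convergence of MP-ex} together with the classical Hurwitz theorem (equivalently, Rouch\'e's theorem) on the zeros of locally uniform limits of holomorphic functions.

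First I would fix the analytic setup. By Theorem \ref{uniform convergence of MP-ex}, the polynomials $S_n(f;t)$ converge to $f(it)$ uniformly on every compact subset of the \emph{open} strip $E(f)$; in particular $t\mapsto f(it)$ is holomorphic on $E(f)$, and it is not identically zero there, since $f$ is assumed to have a (simple) pole at $s_+$ or at $s_-$, so $f\not\equiv 0$. Consequently $t\mapsto f(it)$ has only isolated zeros in $E(f)$, and the degenerate case in which the statement would be vacuous does not occur.

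Next, fix a zero $\rho\in E(f)$ of $t\mapsto f(it)$, of multiplicity $k\ge 1$. Since $\varepsilon>0$ is taken sufficiently small, we may assume the closed disk $\overline{B}=\{t:|t-\rho|\le\varepsilon\}$ lies in $E(f)$ and contains no zero of $f(it)$ other than $\rho$. Put $\delta:=\min_{|t-\rho|=\varepsilon}|f(it)|>0$. The circle $\{|t-\rho|=\varepsilon\}$ is a compact subset of $E(f)$, so Theorem \ref{uniform convergence of MP-ex} furnishes an $N$ with $|S_n(f;t)-f(it)|<\delta\le|f(it)|$ on that circle for all $n\ge N$. Rouch\'e's theorem then shows that, for $n\ge N$, $S_n(f;\cdot)$ and $f(i\cdot)$ have the same number of zeros, counted with multiplicity, inside $B$, namely $k\ge 1$; hence there is at least one $\rho_n\in Z_n(f)$ with $|\rho_n-\rho|<\varepsilon$, which is the first assertion. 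If moreover $\rho$ is a simple zero, then $k=1$, so the Rouch\'e count gives exactly one zero of $S_n(f;\cdot)$ in $B$ counted with multiplicity; it is therefore a simple zero of $S_n(f;\cdot)$ and the unique element of $Z_n(f)\cap B$, which yields the uniqueness of $\rho_n$.

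An equivalent and equally short route is the argument principle: $\#\{\text{zeros of }S_n(f;\cdot)\text{ in }B\}=\frac{1}{2\pi i}\oint_{|t-\rho|=\varepsilon}\frac{\partial_t S_n(f;t)}{S_n(f;t)}\,dt$, and this integer tends to $\frac{1}{2\pi i}\oint_{|t-\rho|=\varepsilon}\frac{\partial_t f(it)}{f(it)}\,dt=k$ as $n\to\infty$, using uniform convergence of $S_n(f;\cdot)$ — and of $\partial_t S_n(f;\cdot)$ on the circle, via a Cauchy estimate on a slightly larger disk inside $E(f)$ — together with $|f(it)|\ge\delta>0$ on the circle; an integer sequence converging to $k$ equals $k$ eventually. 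There is no computational difficulty here; the only point requiring care is checking that the hypotheses of Hurwitz/Rouch\'e are genuinely in force — that $f(it)$ is holomorphic and $\not\equiv 0$ on $E(f)$ and that the convergence is uniform on the chosen circle — all of which are supplied by Theorem \ref{uniform convergence of MP-ex} and the openness of $E(f)$.
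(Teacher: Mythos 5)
Your proof is correct and follows essentially the same route as the paper: choose a small closed disk around $\rho$ inside $E(f)$ containing no other zeros, use the uniform convergence from Theorem \ref{uniform convergence of MP-ex} to get $|S_n(f;t)-f(it)|<|f(it)|$ on the boundary circle, and apply Rouch\'e's theorem to match the zero counts (with the simple-zero case giving uniqueness). The additional remarks on non-degeneracy of $f$ and the argument-principle variant are fine but not needed beyond what the paper already does.
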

\begin{proof}
Take a zero $i\rho$ of $f(it)$, and an arbitrary small $\varepsilon>0$. We can assume that the disk $D_{\varepsilon}=\left\{t\in\mathbb{C}:|t-\rho|\leq\varepsilon\right\}$ has no other zeros of $f(it)$, and $D_{\varepsilon}\subset \left\{t\in \mathbb{C}:-\frac14 < \Im t < \frac14 \right\}$. Since $S_n(f; t)$ converges to $f(it)$ uniformly in $D_{\varepsilon}$ by Theorem \ref{uniform convergence of MP-ex}, we can take $N\in\mathbb{N}$ such that
\[
n\geq N \Rightarrow |f(it)-S_n(f; t)|<|f(it)| \quad(t\in \partial D_{\varepsilon}).
\]
Then by Rouch\'{e}'s theorem in complex analysis, $f(it)$ and $S_n(f; t)$ have the same number of zeros in $D_{\varepsilon}$ taking the multiplicity into account. So we get one of those zeros $\rho_n \in Z_n(f)$ as in the statement. If $i\rho$ is simple, $\rho_n$ is the unique simple zero.
\end{proof}
Therefore, we are interested in the distribution of zeros of $S_n(f; t)$ for large $n\geq0$, because it provides some information of the zeros of $f(it)$. For example, consider the MP-expansion of the Riemann zeta function $f(it)=\zeta(\frac12+it)$. If all zeros of $S_n(f; t)$ were real for infinitely many $n\geq 0$, then the Riemann hypothesis would be satisfied (but not vice versa). And it might suggest information about the multiplicity of each zero of $\zeta(\frac12+it)$.

\subsection{Determinant formula for partial sums}
The partial sum $S_n(f; t)$ is a linear combination of orthogonal polynomials. Such a polynomial is expressed as the characteristic polynomial of the so-called companion matrix. It means that the zeros of $S_n(f; t)$ coincide with the eigenvalues of this matrix. Such expressions for $S_n(f; t)$ and its zeros are also mentioned in \cite{Ku2008}. Most of all, in general, the numerical calculation of matrix eigenvalues are much faster than that of zeros of a polynomial. 
\\
To identify the companion matrix in our situation, we recall the recurrence formula of $q_n^{(\nu, 0)}(it)$
\begin{equation}\label{rec-form of q_n}
2it q_n^{(\nu, 0)}(it)=\left(n+\nu-1 \right)q_{n-1}^{(\nu, 0)}(it)-(n+1)q_{n+1}^{(\nu, 0)}(it).
\end{equation}
We use the notation
\[
\mathbf{f}_n(t):={}^t\left(q_0^{(\nu, 0)}(it),\ldots,q_{n-1}^{(\nu, 0)}(it)\right)
\]
for a column vector-valued function, and
\[
\mathbf{H}_n:=
\left(
\begin{array}{ccccc}
0 & -1 & & &  \\
\nu & 0 & -2 & &  \\
 & \ddots & \ddots & \ddots &  \\
 & & n+\nu -3 & 0 & -(n-1)  \\
 & & & n +\nu-2 & 0
\end{array}
\right)
, \quad
\mathbf{e}_n=
\left(
\begin{array}{c}
0  \\
\vdots  \\
0  \\
1 
\end{array}
\right),
\]
for an $n\times n$ tridiagonal matrix, and the $n$-th standard vector in $\mathbb{R}^n$. \eqref{rec-form of q_n} is rewritten as follows.
\begin{equation}\label{rec-form2 of q_n}
2it\mathbf{f}_n(t)=\mathbf{H}_n\mathbf{f}_n(t)-nq_n^{(\nu, 0)}(it) \mathbf{e}_n.
\end{equation}
In this manner, we also rewrite $S_n(f; t)=\sum_{k=0}^n a_k q_k^{(\nu, 0)}(it)$, for $n\geq 0$ such that $a_n\neq 0$, as
\begin{equation}\label{S_n2}
S_n(f; t)=\left(a_0,\ldots,a_{n-1}\right)\mathbf{f}_n(t)+a_n q_n^{(\nu, 0)}(it).
\end{equation}
Then combining the two formula \eqref{rec-form2 of q_n}, \eqref{S_n2} to eliminate the term of $q_n^{(\nu, 0)}(it)$, and putting
\begin{equation}\label{companion matrix}
\mathbf{B}_n:=\mathbf{H}_n+\frac{n}{a_n}\mathbf{e}_n\left(a_0,\ldots,a_{n-1}\right),
\end{equation}
we get
\begin{equation}\label{eigen relation}
2it\mathbf{f}_n(t)=\mathbf{B}_n\mathbf{f}_n(t)-\frac{n}{a_n}S_n(f; t)\mathbf{e}_n.
\end{equation}
The matrix $\mathbf{B}_n$ is called the companion matrix of $S_n(f; t)$ in term of $q_n^{(\nu, 0)}(it)$.
We easily see from \eqref{eigen relation} that for every zero $\rho$ of $S_n(f; t)$, $2i\rho$ is an eigenvalue of $\mathbf{B}_n$ with the corresponding right-eigenvector being $\mathbf{f}_n(\rho)$. Moreover, the converse is also true. 
\begin{lem}\label{lem companion matrix expression}
The zeros of the polynomial $S_n(f; t)$ multiplied with $2i$, coincide with the eigenvalues of the matrix $\mathbf{B}_n$ counting algebraic multiplicity. 
\end{lem}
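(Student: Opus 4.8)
The plan is to prove both inclusions of the claimed equality between the multiset of zeros (with multiplicity) of $S_n(f;t)$ scaled by $2i$ and the multiset of eigenvalues of $\mathbf{B}_n$. The ``easy'' direction has essentially been indicated in the text already: from \eqref{eigen relation}, if $\rho$ is a zero of $S_n(f;t)$ then $\mathbf{B}_n\mathbf{f}_n(\rho)=2i\rho\,\mathbf{f}_n(\rho)$, and since $q_0^{(\nu,0)}(i\rho)=1$ the vector $\mathbf{f}_n(\rho)$ is nonzero, so $2i\rho$ is genuinely an eigenvalue. The content of the lemma is therefore the reverse inclusion together with the bookkeeping of multiplicities, for which I would pass to a clean determinantal identity.

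First I would show that $\det(2it\,I_n-\mathbf{B}_n)$ equals $S_n(f;t)$ up to a nonzero constant factor (independent of $t$), i.e. that $S_n(f;t)$ is, up to scaling, the characteristic polynomial of $\mathbf{B}_n$. The way to see this: the vector-valued identity \eqref{eigen relation} says $(2it\,I_n-\mathbf{B}_n)\mathbf{f}_n(t)=-\frac{n}{a_n}S_n(f;t)\mathbf{e}_n$. Solving for the first coordinate of $\mathbf{f}_n(t)$ by Cramer's rule gives $q_0^{(\nu,0)}(it)=1=-\frac{n}{a_n}S_n(f;t)\cdot\frac{\det(M)}{\det(2it\,I_n-\mathbf{B}_n)}$, where $M$ is the matrix obtained by replacing the first column of $2it\,I_n-\mathbf{B}_n$ by $\mathbf{e}_n$. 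That minor $\det(M)$ is, up to sign, the determinant of the lower-left $(n-1)\times(n-1)$ block of $2it\,I_n-\mathbf{H}_n$ (the rank-one correction lives in the last row and does not affect this minor), which is triangular-ish with the superdiagonal entries $-1,-2,\dots,-(n-1)$ on it, hence a nonzero constant $\pm(n-1)!$. Rearranging yields $\det(2it\,I_n-\mathbf{B}_n)=c\cdot S_n(f;t)$ with $c\in\mathbb{C}^\times$ explicit, and in particular $\deg S_n = n$ (consistent with $a_n\neq 0$), so the two polynomials have the same degree and the same roots with the same multiplicities.

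From that identity the lemma is immediate: the roots of $t\mapsto\det(2it\,I_n-\mathbf{B}_n)$ are exactly the $t$ with $2it$ an eigenvalue of $\mathbf{B}_n$, and algebraic multiplicity of an eigenvalue is by definition its multiplicity as a root of the characteristic polynomial; multiplying the characteristic polynomial by the nonzero constant $c^{-1}$ and reparametrising $t\mapsto 2it$ (a linear change of variable, which preserves root multiplicities) turns this into the statement about $S_n(f;t)$. I should remember to note the hypothesis $a_n\neq 0$ is what makes $\mathbf{B}_n$ in \eqref{companion matrix} well-defined and makes $S_n(f;t)$ have degree exactly $n$; for $n$ with $a_n=0$ one simply passes to the largest $k\le n$ with $a_k\neq 0$, or the statement is read for the relevant $n$.

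The main obstacle is the determinant bookkeeping in the middle step: verifying carefully that the relevant minor $\det(M)$ is a nonzero constant and tracking the exact scalar $c$, including signs, so that one can legitimately conclude equality of degrees and hence that \emph{every} eigenvalue (not just that the eigenvalues are among the scaled zeros) arises from a zero of $S_n(f;t)$. Everything else is either already in the excerpt (the ``zeros $\Rightarrow$ eigenvalues'' direction via \eqref{eigen relation}) or is the standard fact that a companion-type matrix has the expansion polynomial as its characteristic polynomial; I would cite \cite{Ku2008} for the analogous construction and keep the computation terse.
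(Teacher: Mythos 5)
Your argument is correct and is essentially the route the paper itself relies on: the lemma is exactly the statement that $S_n(f;t)$ is, up to a nonzero scalar, the characteristic polynomial of $\mathbf{B}_n$ evaluated at $2it$, which is the identity \eqref{eigenvalue expression of approx. zeros} recorded immediately after the lemma (the paper outsources the proof to Day--Romero, and your Cramer's-rule derivation from \eqref{eigen relation} is the standard proof of that identity). The only blemishes are cosmetic: the minor you need is the upper-right (delete last row, first column) block of $2it\,I_n-\mathbf{H}_n$, not the lower-left one, and its diagonal entries are $1,2,\dots,n-1$ rather than $-1,\dots,-(n-1)$; both slips are absorbed by your ``$\pm(n-1)!$'' and do not affect the conclusion.
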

\noindent
Above lemma is shown in terms of more general orthogonal polynomials in \cite{Day Romero}. 
With the scalar normalization, we obtain an alternative expression for $S_n(f; t)$, for $n\geq 0$ such that $a_n\neq 0$, as
\begin{equation}\label{eigenvalue expression of approx. zeros}
S_n(f; t)
=
\frac{a_n}{n!}\det\left(2itI_n-\mathbf{B}_n\right). 
\end{equation}


\subsection{Boundary of zero region of $S_n(f; t)$}
Using the determinant formula for $S_n(f; t)$, we obtain a bound for its zeros. 
\begin{lem}[Gerschgorin's disk theorem, cf. \cite{Yamamoto}]\label{Gerschgorin}
For a matrix $A=(a_{ij})\in M_n(\mathbb{C})$, put
\[
r_j=\sum_{\substack{i=1\\i\neq j}}^n|a_{ij}|,\quad R_j=\{z\in \mathbb{C}\big||z-a_{jj}|\leq r_j\}.
\]
Then, the all eigenvalues of $A$ lie on the union of all circles $ R=\bigcup_{j=1}^n R_j$. If a connected component $\Gamma$ of $R$ is consist of $m$ $R_j$'s, then $\Gamma$ has exactly $m$ eigenvalues counting multiplicity. 
\end{lem}
For $n\geq 0$, put $\{\rho_{n, k}\}_{k=1}^{n}:=Z_n(f)$ to be the zeros of $S_n(f; t)$. And denote the largest zero of $S_n(f; t)$ by $\rho_n^{\max} \in \mathbb{C}$ so that
\begin{equation}
|\rho_n^{\max}|
=
\max_{1\leq k \leq n}\left\{|\rho_{n, k}|\right\}. 
\end{equation}
We continue to consider only $n\geq 0$ such that $a_n(f)\neq 0$. 
\begin{lem}\label{zero-bound}
Retain the same assumption and notations in Lemma \ref{coefficients in MP-ex}. 
Then we have following bound of $\rho_{n}^{\max}$; there is a constant $C>0$ such that
\[
|\rho_{n}^{\max}|\leq Cn.
\]
\end{lem}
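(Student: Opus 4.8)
The plan is to combine the determinant formula \eqref{eigenvalue expression of approx. zeros}, which says that the zeros of $S_n(f;t)$ scaled by $2i$ are exactly the eigenvalues of the companion matrix $\mathbf{B}_n$, with the Gerschgorin disk theorem (Lemma \ref{Gerschgorin}) applied to $\mathbf{B}_n$. Thus it suffices to bound the Gerschgorin radii of $\mathbf{B}_n$ by $O(n)$, so that every eigenvalue of $\mathbf{B}_n$ has modulus $O(n)$, and hence every zero $\rho_{n,k}$ satisfies $|2i\rho_{n,k}|=O(n)$, giving $|\rho_n^{\max}|\le Cn$.

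First I would write down the entries of $\mathbf{B}_n=\mathbf{H}_n+\frac{n}{a_n}\mathbf{e}_n(a_0,\dots,a_{n-1})$ explicitly. All the diagonal entries are $0$ (the tridiagonal part $\mathbf{H}_n$ has zero diagonal, and the rank-one correction only touches the last row), so the $j$-th Gerschgorin disk is centered at the origin with radius $r_j$ equal to the sum of the absolute values of the off-diagonal entries in column $j$. For columns $j<n$ only $\mathbf{H}_n$ contributes: column $j$ of $\mathbf{H}_n$ has the two entries from the sub/super-diagonal, namely $-(j-1)$ (or $-(j)$, depending on indexing) and $j+\nu-2$, so $r_j\le 2(j+\nu)=O(n)$ uniformly for $1\le j\le n$. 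For the last column $j=n$, in addition to the $\mathbf{H}_n$ contribution $n+\nu-2$ there is the entry $\frac{n}{a_n}a_{n-1}$ from the rank-one term, so $r_n\le (n+\nu-2)+\left|\frac{n\,a_{n-1}}{a_n}\right|$.

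The one non-routine point is controlling $\left|\frac{n\,a_{n-1}}{a_n}\right|$. Here I would invoke Lemma \ref{coefficients in MP-ex}: under its hypotheses $a_n\sim A_- n^{s_--\frac{\nu}{2}}+(-1)^{n+1}A_+ n^{-s_+-\frac{\nu}{2}}$. If, say, $|\Re s_-|\le|\Re s_+|$ so that the $A_-$ term dominates (the other case is symmetric), then $a_n\asymp n^{\Re s_- -\nu/2}$ and likewise $a_{n-1}\asymp (n-1)^{\Re s_- -\nu/2}$, whence $\frac{a_{n-1}}{a_n}\to 1$ and $\left|\frac{n\,a_{n-1}}{a_n}\right|=O(n)$. (One must restrict to $n$ with $a_n\ne 0$, as the statement already does; for such $n$ large enough the asymptotic is genuine, and the finitely many small $n$ are absorbed into the constant $C$.) The main obstacle — and the only thing requiring care — is precisely this ratio estimate: the two terms in the asymptotic for $a_n$ can interfere and the subdominant one oscillates with $(-1)^{n+1}$, so to be safe one should argue that $|a_n|\gg n^{-\max\{|\Re s_+|,|\Re s_-|\}-\nu/2}$ along the subsequence where $a_n\ne 0$, or more simply note that whichever of the two powers is larger governs $|a_{n-1}/a_n|$ up to a bounded factor. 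Once $\left|\frac{n\,a_{n-1}}{a_n}\right|=O(n)$ is in hand, we get $\max_j r_j=O(n)$, Gerschgorin places all eigenvalues of $\mathbf{B}_n$ in the disk of radius $O(n)$ about $0$, and dividing by $2$ yields $|\rho_n^{\max}|\le Cn$, completing the proof.
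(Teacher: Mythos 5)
Your overall strategy --- Gerschgorin applied to the companion matrix $\mathbf{B}_n$ --- is exactly the paper's, but there is a genuine gap in how you account for the rank-one correction, and it is precisely the point where the paper has to do extra work. The matrix $\frac{n}{a_n}\mathbf{e}_n\left(a_0,\ldots,a_{n-1}\right)$ has its nonzero entries along the entire last \emph{row}: its $(n,j)$ entry is $\frac{n\,a_{j-1}}{a_n}$ for every $j$. Since Lemma \ref{Gerschgorin} as stated uses column sums $r_j=\sum_{i\neq j}|a_{ij}|$, this term contributes to \emph{every} disk $R_j$, not just the last one; your claim that ``for columns $j<n$ only $\mathbf{H}_n$ contributes'' is therefore false (and if you instead pass to row sums, the single last row picks up all $n-1$ of these entries at once). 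Consequently the only ratio you bound, $\left|\frac{n\,a_{n-1}}{a_n}\right|$, is the harmless one: with $|a_m|\asymp m^{-a-\frac{\nu}{2}}$, $a=\min\{|\Re s_-|,|\Re s_+|\}$, one gets $\left|\frac{a_{j-1}}{a_n}\right|\leq C_2\left(\frac{n}{j}\right)^{a}$, so for small $j$ the entry $\left|\frac{n\,a_{j-1}}{a_n}\right|$ is of order $n^{1+a}$, which destroys the $O(n)$ bound. Naive Gerschgorin on $\mathbf{B}_n$ only yields $|\rho_n^{\max}|=O(n^{1+a})$ (column version) or $O(n^{2})$ / $O(n^{1+a})$ (row version), not $O(n)$.

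The paper's fix is a diagonal similarity transformation: set $\mathbf{n}=\mathrm{diag}[1,2^{a},\ldots,n^{a}]$ and apply Gerschgorin to $\mathbf{B}'_n=\mathbf{n}^{-1}\mathbf{B}_n\mathbf{n}$, which has the same eigenvalues. The $(n,j)$ entry becomes $\frac{n\,a_{j-1}}{a_n}\left(\frac{j}{n}\right)^{a}$, which the estimate above caps at $C_2 n$ uniformly in $j$, while the tridiagonal entries are only multiplied by the bounded factors $\left(\frac{j\pm1}{j}\right)^{\mp a}$ and so remain $O(n)$. With that modification every Gerschgorin radius is $\leq Cn$ and the conclusion follows. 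Your side remark about possible interference between the two terms of the asymptotic for $a_n$ (and the restriction to $n$ with $a_n\neq 0$) is a legitimate concern, but it is secondary; the missing ingredient is the rescaling that tames the entries $\frac{n\,a_{j-1}}{a_n}$ for $j$ far from $n$.
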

\begin{proof}
Recall the definition of $\mathbf{B}_n$ \eqref{companion matrix}; 
a sum of a tridiagonal matrix and a bottom row matrix. 
By Lemma \ref{coefficients in MP-ex}, we can take two constants $C_1, C'_1>0$ such that 
\[
0<C_1 n^{-a-\frac{\nu}{2}} \leq |a_n|\leq C'_1 n^{-a-\frac{\nu}{2}},
\quad
a=\min\left\{|\Re s_-|, |\Re s_+|\right\}>0
\]
for any $n\geq 0$ such that $a_n(f)\neq 0$. Then there exists a constant $C_2>0$ such that
\begin{equation}\label{estimate of frac. coeff.}
\Big|\frac{a_{j-1}}{a_n}\Big| 
\leq
 C_2\left(\frac{j}{n}\right)^{-a}
 \quad
  (0\leq j\leq n-1).
\end{equation}
Put $\mathbf{n}$ to be a diagonal matrix
\[
\mathbf{n}:=\mathrm{diag}[1,2^a, 3^a, \ldots,n^a],
\]
and $\mathbf{B}'_n$ to be a similar matrix to $\mathbf{B}_n$,
\[
\mathbf{B}'_n:=\mathbf{n}^{-1}\mathbf{B}_n\mathbf{n}.
\]
Notice that $\mathbf{B}'_n$ has the same eigenvalues as $\mathbf{B}_n$.
Then, consider to apply the Gerschgorin disk theorem (Lemma \ref{Gerschgorin}) to $\mathbf{B}'_n$. The radius of the disk $R_j$ is 
\[
r_j=(j-1)\left(\frac{j+1}{j-1}\right)^a+(j+\frac12)\left(\frac{j-1}{j+1}\right)^a+n\Big|\frac{a_{j-1}}{a_n}\Big| \left(\frac{j}{n}\right)^a \leq Cn,
\]
with a constant $C\in\mathbb{C}$.
So the region $R=\cup R_j$ is contained in a larger disk
\[
\{z\in \mathbb{C} \big| |z|\leq Cn\}.
\]
Therefore, the zeros $\{\rho_{n, k}\}_{k=1}^{n}$, that are eigenvalues of $\mathbf{B}'_n$ multiplied by $(2i)^{-1}$, are bounded as $|\rho_{n, k}|\leq \frac{C}{2}n$. We get the result replacing the constant $C$.
\end{proof}


%
%
%
\begin{lem}
Retain the same assumption and notations in Lemma \ref{coefficients in MP-ex}. 
Then there exists a positive constant $C'>0$ such that
\begin{equation}
|\rho_n^{\mathrm{max}}|\geq C' n
\end{equation}
for sufficient large $n \geq 0$.
\end{lem}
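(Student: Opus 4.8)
The plan is to establish the matching lower bound $|\rho_n^{\max}| \geq C'n$ by showing that the companion matrix $\mathbf{B}_n$ (equivalently $\mathbf{B}_n'$) has at least one eigenvalue of magnitude $\gtrsim n$. The most direct route is to control the trace of a suitable power of $\mathbf{B}_n$, or — more simply — to examine the characteristic polynomial $\det(xI_n - \mathbf{B}_n)$ near $x=0$ and exploit a mismatch between the degree and the size of the low-order coefficients. Since the product of all eigenvalues equals $\pm\det\mathbf{B}_n$ (up to sign), and there are only $n$ of them, if I can show $|\det\mathbf{B}_n|$ grows faster than $(C'n)^n$ would allow when \emph{all} zeros are bounded by $C'n$, I am done: namely $|\det\mathbf{B}_n| = \prod_k |2i\rho_{n,k}| \leq (2|\rho_n^{\max}|)^n$, so $|\rho_n^{\max}| \geq \tfrac12 |\det\mathbf{B}_n|^{1/n}$. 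Thus it suffices to prove $|\det\mathbf{B}_n|^{1/n} \geq C''n$ for infinitely many $n$ (or all large $n$ for which $a_n \neq 0$).

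To compute $\det\mathbf{B}_n$, I would use the structure from \eqref{companion matrix}: $\mathbf{B}_n = \mathbf{H}_n + \frac{n}{a_n}\mathbf{e}_n(a_0,\ldots,a_{n-1})$, a tridiagonal matrix plus a rank-one correction in the last row. Expanding the determinant along the last row, $\det\mathbf{B}_n$ is a linear combination $\sum_{j=0}^{n-1} \frac{n}{a_n} a_j \cdot (\text{cofactor})$ plus $\det\mathbf{H}_n$. The relevant cofactors are determinants of tridiagonal-type submatrices of $\mathbf{H}_n$ with known entries $\nu, \nu+1, \ldots$ on the subdiagonal and $-1, -2, \ldots$ on the superdiagonal; in particular, deleting the last row and the first column leaves a lower-triangular-ish block whose determinant is (up to sign) $\prod_{k=1}^{n-1}(\text{superdiagonal entries}) = (-1)^{n-1}(n-1)!$. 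So the $j=0$ term contributes roughly $\frac{n}{a_n} a_0 \cdot (n-1)! = \frac{a_0}{a_n} n!$. Using the two-sided bound $C_1 n^{-a-\nu/2} \leq |a_n| \leq C_1' n^{-a-\nu/2}$ from Lemma \ref{coefficients in MP-ex}, this term has magnitude $\gtrsim |a_0| \cdot C_1^{-1} n^{a+\nu/2} \cdot (n-1)!$, which after taking $n$-th roots gives $\gtrsim n/e$ by Stirling, exactly of order $n$. The remaining task is to check that this dominant term is not cancelled by the others — i.e., that $\det\mathbf{B}_n$ does not accidentally vanish or become small. If $a_0 \neq 0$ (which holds for $\Xi$ and the $L$-functions of interest, and more generally can be arranged), the $j=0$ cofactor contribution is genuinely of size $\gtrsim n! \cdot n^{a+\nu/2}$ while the other cofactors, being minors of $\mathbf{H}_n$ whose entries are $O(n)$, are $O((n-1)! \cdot n^{?})$ — so I must verify the combinatorial fact that $|\det\mathbf{B}_n| \geq c \cdot n!$ for large $n$, perhaps by a cleaner argument such as bounding $|\det\mathbf{B}_n - \frac{a_0}{a_n}n!|$ or by noting that $\det\mathbf{B}_n$ equals $\frac{n!}{a_n}S_n(f;0)$ up to a sign — indeed by \eqref{eigenvalue expression of approx. zeros}, $S_n(f;0) = \frac{a_n}{n!}\det(-\mathbf{B}_n)$, so $\det\mathbf{B}_n = (-1)^n \frac{n!}{a_n} S_n(f;0)$, and $S_n(f;0) \to f(0)$ by uniform convergence (Theorem \ref{uniform convergence of MP-ex}), which is nonzero as long as $0$ is not a zero of $f$.

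So the streamlined argument is: by \eqref{eigenvalue expression of approx. zeros} evaluated at $t=0$, $|\det\mathbf{B}_n| = \frac{n!}{|a_n|}|S_n(f;0)|$; since $S_n(f;0) \to f(0) \neq 0$ (assuming $f(0)\neq 0$; otherwise evaluate at another point $t_0 \in E(f)$ with $f(it_0)\neq 0$, replacing $\det\mathbf{B}_n$ by $\det(2it_0 I_n - \mathbf{B}_n)$ and noting $|2i\rho_{n,k} - 2it_0| \leq 2|\rho_n^{\max}| + 2|t_0|$), we have $|S_n(f;0)| \geq \frac12|f(0)|$ for large $n$, hence $|\det\mathbf{B}_n| \geq \frac{|f(0)|}{2|a_n|}n! \geq \frac{|f(0)|}{2C_1'}n^{a+\nu/2}\,n!$. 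Combined with $\prod_{k=1}^n|2i\rho_{n,k} - 2it_0| = |\det(2it_0I_n - \mathbf{B}_n)|$ and $|2i\rho_{n,k}-2it_0| \leq 2(|\rho_n^{\max}|+|t_0|)$, this yields $(2|\rho_n^{\max}| + 2|t_0|)^n \geq \frac{|f(0)|}{2C_1'}n!$, so by Stirling $2|\rho_n^{\max}| + 2|t_0| \geq (n!)^{1/n}\cdot(\text{const})^{1/n} \sim n/e$, giving $|\rho_n^{\max}| \geq C'n$ for a suitable $C' > 0$ and all large $n$. The main obstacle — and the point requiring care — is handling the case $f(0) = 0$ (which does occur if $\Xi$ or the $L$-function happens to vanish at the relevant point, though for $\Xi$ it does not since $\zeta(1/2)\neq 0$); the fix is to shift the evaluation point into $E(f)$ to some $t_0$ with $f(it_0)\neq 0$, and the only subtlety there is confirming $S_n(f;t_0)\to f(it_0)$ uniformly, which is precisely Theorem \ref{uniform convergence of MP-ex}, and that the crude eigenvalue bound $|2i\rho_{n,k}-2it_0|\leq 2|\rho_n^{\max}|+2|t_0|$ loses nothing to leading order in $n$.
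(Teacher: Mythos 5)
Your streamlined argument is correct, but it proves the lower bound by a genuinely different mechanism than the paper. The paper works with the \emph{second power sum} of the zeros: it computes $\mathrm{tr}(\mathbf{B}_n^2)$ explicitly from the tridiagonal-plus-rank-one structure, obtaining $\sum_{k}\rho_{n,k}^2=\tfrac{1}{24}n(n-1)(4n+1)+\tfrac12(n-1)\tfrac{a_{n-2}}{a_n}$, shows via the ratio bound $|a_{n-2}/a_n|=O(1)$ that this is $\sim\tfrac23 n^3$, and concludes from $n|\rho_n^{\max}|^2\geq|\sum_k\rho_{n,k}^2|$. You instead use the \emph{product} of the (shifted) zeros: $\prod_k|2it_0-2i\rho_{n,k}|=|\det(2it_0I_n-\mathbf{B}_n)|=\tfrac{n!}{|a_n|}|S_n(f;t_0)|$ by \eqref{eigenvalue expression of approx. zeros}, with $S_n(f;t_0)\to f(it_0)\neq 0$ by Theorem \ref{uniform convergence of MP-ex} and $|a_n|$ bounded above by Lemma \ref{coefficients in MP-ex}, so Stirling forces $|\rho_n^{\max}|\gtrsim n/(2e)$. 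Your route buys a cleaner proof (no trace computation) and an explicit asymptotic constant, namely $\liminf_n|\rho_n^{\max}|/n\geq\tfrac{1}{2e}$, at the cost of invoking the uniform convergence theorem and needing a point $t_0\in E(f)$ with $f(it_0)\neq0$ (which exists since $f$ has poles, hence is not identically zero, and $E(f)$ contains the real line); the paper's route is more self-contained, using only the asymptotics of $a_n$ and matrix algebra, and as a byproduct its identity \eqref{square-zero sum} shows the zeros cannot all cluster near the imaginary axis. One editorial remark: your first paragraph's cofactor expansion of $\det\mathbf{B}_n$ along the last row is a detour you rightly abandon — the worry about cancellation among the cofactor terms is real there, and it is exactly what the identity $\det(2it_0I_n-\mathbf{B}_n)=\tfrac{n!}{a_n}S_n(f;t_0)$ resolves — so only the streamlined version should be kept.
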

\begin{proof}
We claim that for $n\geq 0$, the square sum of $\{\rho_{n,k}\}_{k=1}^{n}$ is
\begin{equation}\label{square-zero sum}
\sum_{k=1}^{n}\rho_{n,k}^2= \frac{1}{24} n(n-1)(4n+1)+\frac{1}{2}(n-1)\frac{a_{n-2}}{a_n}.
\end{equation}
The left hand side is equal to $(2i)^{-2}\mathrm{tr}(\mathbf{B}_n^2)$, and it can be computed easily by the definition \eqref{companion matrix}. In fact, it is computed as
\begin{align*}
\mathrm{tr} (\mathbf{B}_n ^2) =-2\sum_{k=1}^{n-1}k(k+\frac12)-2(n-1)\frac{a_{n-2}}{a_n}
=-\frac16 n(n-1)(4n+1)-2(n-1)\frac{a_{n-2}}{a_n}. 
\end{align*}
This is equivalent to \eqref{square-zero sum}. By the inequality $\Big|\frac{a_{j-1}}{a_n}\Big| \leq C_2\left(\frac{j}{n}\right)^{-a}$ in \eqref{estimate of frac. coeff.}, the right hand side of \eqref{square-zero sum} is estimated as $\sim \frac23 n^3 \; (n\rightarrow \infty)$. Therefore we have
\begin{equation*}
n|\rho_n^{\mathrm{max}}|^2
\geq
\Big| \sum_{k=1}^n \rho_{n, k}^2 \Big| \geq C_3 n^3,
\quad
{}^{\exists}C_3 >0,
\end{equation*}
and get the result. 
\end{proof}
\begin{ex}
For the computation of matrix trace in the above proof, we see an example for $n=4$ with general coefficients as
\begin{align*}
\mathrm{tr}\left(
\begin{array}{cccc}
 0 & -b_1 & 0 & 0 \\
 c_1 & 0 & -b_2 & 0 \\
 0 & c_2 & 0 & -b_3 \\
 \frac{a_0}{a_4} & 0 & \frac{a_2}{a_4}+c_3 & 0 \\
\end{array}
\right)^2
=
-2 b_1 c_1-2 b_2 c_2-2 b_3 \left(\frac{a_2}{a_4}+c_3\right).
\end{align*}
\end{ex}

\begin{prop}
There exist positive constants $C, C' >0$ such that
\begin{equation}
C' \leq \frac{|\rho_n^{\max}|}{n} \leq C
\end{equation}
for sufficiently large $n\geq 0$. Here we retain the same assumption and notations in Lemma \ref{coefficients in MP-ex}. 
\end{prop}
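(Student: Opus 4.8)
The final Proposition simply collects the two preceding lemmas, so the proof is essentially an assembly: the upper bound $|\rho_n^{\max}|\leq Cn$ is exactly Lemma~\ref{zero-bound}, and the lower bound $|\rho_n^{\max}|\geq C'n$ for sufficiently large $n$ is the content of the lemma immediately preceding this Proposition. The plan is therefore to state that both bounds hold under the common hypothesis (the assumptions and notations of Lemma~\ref{coefficients in MP-ex}, which guarantee the two-sided asymptotics $C_1 n^{-a-\nu/2}\leq |a_n|\leq C_1' n^{-a-\nu/2}$ for those $n$ with $a_n(f)\neq 0$), and then divide through by $n$.

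Concretely, first I would invoke Lemma~\ref{zero-bound} to get a constant $C>0$ with $|\rho_n^{\max}|\leq Cn$ for every $n\geq 0$ such that $a_n(f)\neq 0$; dividing by $n$ yields $|\rho_n^{\max}|/n\leq C$. Next I would invoke the lower-bound lemma to obtain $C'>0$ with $|\rho_n^{\max}|\geq C'n$ for all sufficiently large such $n$; dividing by $n$ gives $|\rho_n^{\max}|/n\geq C'$. Combining these two one-sided estimates over the range of $n$ where both are valid — i.e.\ $n$ large with $a_n(f)\neq 0$ — produces the sandwich $C'\leq |\rho_n^{\max}|/n\leq C$, which is the claim. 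One small bookkeeping point worth a sentence: both lemmas are only asserted for $n$ with $a_n(f)\neq 0$, and the asymptotic formula $a_n\sim A_- n^{s_--\nu/2}+(-1)^{n+1}A_+ n^{-s_+-\nu/2}$ of Lemma~\ref{coefficients in MP-ex} shows this holds for all large $n$ along an arithmetic progression (or all large $n$, depending on whether the two leading terms can cancel), so the statement ``for sufficiently large $n$'' is understood along that subsequence.

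There is really no obstacle here; the work was already done in Lemmas~\ref{zero-bound} and the lower-bound lemma, whose genuine inputs were the Gerschgorin disk theorem applied to the rescaled companion matrix $\mathbf{B}_n' = \mathbf{n}^{-1}\mathbf{B}_n\mathbf{n}$ and the trace computation $\operatorname{tr}(\mathbf{B}_n^2)\sim -\tfrac{2}{3}n^3$ combined with the elementary inequality $n|\rho_n^{\max}|^2\geq |\sum_k \rho_{n,k}^2|$. If anything required care in those lemmas it was the estimate \eqref{estimate of frac. coeff.} controlling the ratios $a_{j-1}/a_n$, which feeds both the Gerschgorin radii and the estimate of the $a_{n-2}/a_n$ correction term in \eqref{square-zero sum}; but since we are allowed to use those lemmas as given, the present proof is a two-line citation.

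\begin{proof}
This is immediate from Lemma~\ref{zero-bound} and the preceding lemma. Indeed, Lemma~\ref{zero-bound} provides a constant $C>0$ with $|\rho_n^{\max}|\leq Cn$, hence $|\rho_n^{\max}|/n\leq C$, for every $n\geq 0$ with $a_n(f)\neq 0$. The preceding lemma provides a constant $C'>0$ with $|\rho_n^{\max}|\geq C'n$, hence $|\rho_n^{\max}|/n\geq C'$, for all sufficiently large such $n$. Combining the two estimates over the range of $n$ where both apply gives
\[
C'\leq \frac{|\rho_n^{\max}|}{n}\leq C
\]
for sufficiently large $n\geq 0$ with $a_n(f)\neq 0$. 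By the asymptotic formula of Lemma~\ref{coefficients in MP-ex}, $a_n(f)\neq 0$ for all large $n$ in the relevant subsequence, so the claim follows.
\end{proof}
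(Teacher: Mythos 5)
Your proof is correct and matches the paper's intent exactly: the Proposition is stated immediately after the two lemmas precisely as their combination, and the paper offers no further argument beyond that assembly. Your additional remark about restricting to $n$ with $a_n(f)\neq 0$ is a reasonable bookkeeping point consistent with the paper's convention of considering only such $n$.
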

The equation \eqref{square-zero sum} also denies the fact that all zeros of $S_n(f; t)$ lie in a neighbour of the imaginary axis because the square sum is positive for large $n$. Now, we concern about how much information we can get from the power sum $\sum_{k=1}^{n}\rho_{n,k}^s$ with $s$ being not only a positive integer.

\medskip 
We continue the study of zeros of $S_n(f; t)$, especially for the completed Riemann zeta function and Dirichlet L-functions. We also consider the MP-expansion of the Selberg zeta function, which is known to satisfy the Riemann hypothesis.  


\vskip 1cm
\noindent
{\bf Acknowledgment:} I am deeply grateful to Prof. Wakayama who provided carefully considered suggestions and valuable comments. I would also like to thank to Hochstenbach, dr. M.E. who gave a fruitful result in the numerical calculation.

%

\begin{flushleft}
\bigskip
Hiroto Inoue\par
Graduate School of Mathematics, \par
Kyushu University \par
744 Motooka, Nishi-ku, Fukuoka 819-0395 JAPAN \par
\texttt{hi-inoue@math.kyushu-u.ac.jp}
\par
\end{flushleft}

\end{document}